\documentclass[12pt]{article}

\usepackage{amstext}
\usepackage{amsthm}
\usepackage{amsmath}
\usepackage{amssymb}
\usepackage{latexsym}
\usepackage{amsfonts}
\usepackage{color}
\usepackage{graphicx}

\usepackage[pagebackref,hypertexnames=false, colorlinks, citecolor=black, linkcolor=black, urlcolor=red]{hyperref}

\definecolor{dark_purple}{rgb}{0.4, 0.0, 0.4}
\definecolor{dark_green}{rgb}{0.0, 0.7, 0.0}

\newcommand\black{\color{black}}

\newcommand\KL{{\mathcal K}_\Lambda}

\usepackage{latexsym}
\usepackage{amssymb}
\usepackage{euscript}
%\usepackage[dvips]{graphics}
%\usepackage{epsf}
%\NeedsTeXFormat{LaTeX2e}[1994/12/01]

\let\cal=\mathcal      

\def\mcc{M\raise.5ex\hbox{c}C}
\def\mccarthy{M\raise.5ex\hbox{c}Carthy}

\def\eg{{\it e.g. }}
\def\ie{{\it i.e. }}

%Hilbert spaces

\def\h{{\cal H}}

%Banach spaces

%modified Greek letters

%\def\l{\lambda}
\def\z{\zeta}

\def\vare{\varepsilon}

%abbreviations

\let\i=\infty

\def\={\ = \ }

%boldface

%other
\def\A{{\cal A}}

%Bbb
\def\C{\mathbb C}

\def\T{\mathbb T}
\def\D{\mathbb D}
\def\B{\mathbb B}

%Commands

%Set-up
\def\be{\setcounter{equation}{\value{theorem}} \begin{equation}}
\def\ee{\end{equation} \addtocounter{theorem}{1}}
\def\beq{\begin{eqnarray*}}
\def\eeq{\end{eqnarray*}}
\def\se{\setcounter{equation}{\value{theorem}}} 
\def\att{\addtocounter{theorem}{1}}
\def\vs{\vskip 5pt}

\def\bp{{\sc Proof: }}
\def\ep{{}{\hfill $\Box$} \vskip 5pt \par}

\def\bl{\begin{lemma}}
\def\el{\end{lemma}}
\def\bt{\begin{theorem}}
\def\et{\end{theorem}}
\def\bprop{\begin{prop}}
\def\eprop{\end{prop}}
\def\bd{\begin{definition}}
\def\ed{\end{definition}}
\def\br{\begin{remark}}
\def\er{\end{remark}}
\def\bexer{\begin{exercise}}
\def\eexer{\end{exercise}}

\newtheorem{theorem}{Theorem}[section]
\newtheorem{proposition}[theorem]{Proposition}
\newtheorem{lemma}[theorem]{Lemma}
\newtheorem{corollary}[theorem]{Corollary}

\newtheorem{definition}[theorem]{Definition}

\newtheorem{question}[theorem]{Question}

%\input ../../def_latex2
%\numberwithin{equation}{section}
\newtheorem{defin}[theorem]{Definition}

\renewcommand\O{\Omega}
\newcommand\hio{H^\infty(\Omega)}

\renewcommand\l{\lambda}

\newcommand\calv{{\mathcal V}}
\newcommand\calvo{{\mathcal V}_0}
\newcommand\calw{{\mathcal W}}
\newcommand\calwo{{\mathcal W}_0}
\newcommand\hiv{{H^\infty(\calv)}}
\renewcommand\S{{\mathcal S}}
\newcommand\cp{Carath\'eodory-Pick\ }
\newcommand\om{\omega}

\numberwithin{equation}{section}
\title{Extensions of bounded holomorphic functions on the tridisk}
\author{\L{}ukasz Kosi\'nski
\thanks{Partially supported by Iuventus Plus grant IP2015 035174}
\and
John E. M\raise.5ex\hbox{c}Carthy
\thanks{Partially supported by National Science Foundation Grant  
DMS 156243}
}

\begin{document}

\bibliographystyle{plain}
\maketitle

{\sc Abstract:} A set $\calv$ in the tridisk $\D^3$ has the polynomial extension property if for every polynomial $p$ 
there is a function $\phi$ on $\D^3$ so that $\| \phi \|_{\D^3} = \| p \|_\calv$ and $\phi |_\calv = p|_\calv$.
We study sets $\calv$ that are  relatively polynomially convex and have the polynomial extension property.
If $\calv$ is one-dimensional, and is either algebraic, or has polynomially convex projections, we show that it is a retract.
If $\calv$ is two-dimensional, we show that either it is a retract, or, for any choice of the coordinate functions, it is the graph of a function of two variables.

\section{Introduction}
\label{seca}

A celebrated theorem of H. Cartan asserts that if $\O$ is a pseudoconvex domain in $\C^d$
and $\calv$ is a holomorphic subvariety of $\O$, then every holomorphic function on $\calv$ extends to a holomorphic
function on $\O$ \cite{car51}. It is not true, however, that every bounded holomorphic function on $\calv$ necessarily 
extends to a bounded holomorphic function on $\calv$ \cite{henpol84, kn10b}. It is even rarer for every bounded holomorphic function to extend to a bounded holomorphic function of the same norm, and when this does occur, there is a special relationship between $\calv$ and $\Omega$, which we seek to explore.

Let $\calv$ be a subset of $\C^d$.
By a {\em holomorphic function on $\calv$} we mean a function $f : \calv \to \C$ with the property that
for every point $\l$ in $\calv$, there is an open ball $B$ in $\C^d$ that contains $\l$, and a holomorphic
function $\phi: B \to \C$ so that $ \phi |_{B \cap \calv} = f |_{B \cap \calv}$.
We shall denote the bounded holomorphic functions on $\calv$ by $H^\i (\calv)$, and equip this space with the
supremum norm:
\[
\| f \|_{\hiv} \ := \ \sup_{\l \in \calv} | f(\l) |.
\]

\begin{defin}
\label{defa1}
Let $\O$ be a bounded domain in $\C^d$, and $\calv \subseteq \O$ be non-empty.
Let $\A$ be a subalgebra of $\hiv$.
We say that $\calv$ has the  $\A$ extension property w.r.t. $\O$  if,  for every $f \in \A$,
%polynomial $p \in \C[z_1, \dots, z_d]$,
there is a function $\phi$ in $H^\i(\O)$ such that $\phi |_\calv = f|_\calv$ and 
\[
\| \phi \|_{H^\i(\O)} \= \| f \|_\hiv .
\]
When $\A =  \C[z_1, \dots, z_d]$, the algebra of polynomials,  we shall call this the polynomial 
extension property.
\end{defin}

We say $\calv$ is a {\em retract} of $\O$ if there is a holomorphic map $r : \O \to \calv$  such that $r |_\calv = {\rm id} |_\calv$.
Clearly any retract has the polynomial extension property, because $ \phi := p \circ r$ gives a norm-preserving extension.
The converse cannot be true without any regularity assumption on $\calv$, because any set that is dense 
(or dense near the distinguished boundary of $\O$) will  trivially have the polynomial  extension property. We shall restrict our attention, therefore, 
to sets that have some form of functional convexity.
We shall say that $\calv$ is a {\em relatively polynomial convex
 subset} of $\O$ if $\overline{\calv}$
is polynomially convex and   $\overline{\calv} \cap \Omega = \calv$.
We shall say that $\calv$ is $\hio$ convex if, for all $\l \in \Omega \setminus \calv$, there exists
a $\phi \in \hio$ such that
\[
| \phi( \l) | \ > \ \sup_{z \in \calv} |\phi(z) | .
\]
\begin{question}
\label{qa1} If $\calv$ is a relatively polynomial convex subset of $\O$ that has the polynomial extension property, must $\calv$ be a retract of $\O$?
\end{question}

In  \cite{agmcvn} it was shown that the answer to Question \ref{qa1} is yes if $\O$ is the bidisk $\D^2$.
We give another proof of this in Theorem~\ref{thmc1}.
\bt
\label{thma1}
A relatively polynomially convex set $\calv \subseteq \D^2$ has the polynomial extension property
if and only if it is a retract.
\et

Not every retract is polynomially convex.
Indeed, suppose $B$ is a Blaschke product whose zeros are dense on the unit circle.
Then $\calv = (z, B(z))$ is a retract whose closure contains $\T \times \overline \D$, so 
its polynomial hull is the whole bidisk. Moreover, any superset of $\calv$ has the polynomial 
extension property trivially (since polynomials attain their supremum), so \eg
$\calv \cup \{ (1/2, w) : w \in \D \}$ is a holomorphic variety with the polynomial extension property.

A more general version of Question \ref{qa1} is the following. We do not know the answer even for $\O$ equal to the bidisk.

\begin{question}
\label{qa2} Is $\calv$  a retract of $\O$ if and only if  $\calv$ is an $\hio$ convex subset of $\O$ that has the $\hiv$  extension property?
\end{question}

Let $\rho$ denote the pseudo-hyperbolic metric on the disk
\[
\rho(z,w) \= \left| \frac{z-w}{1-\bar w z} \right| .
\]
A {\em Kobayashi extremal} for a pair of points $\lambda$ and $\mu$ in a  domain $\O$ is a holomorphic
function $f : \D \to \O$ such that $\lambda$ and $\mu$ are in the range of $f$, 
%say $f(\lambda) = \lambda$ and $f(\mu) = \mu$,
 and so that $\rho(f^{-1}(\lambda), f^{-1}(\mu))$ is minimized over all holomorphic functions $g: \D \to \O$
 that have $\lambda$ and $\mu$ in their range. 
 A {\em Carath\'eodory extremal} is a map $\phi: \O \to \D$ that maximizes $\rho(\phi(\lambda), \phi(\mu))$.
 
If $\O$ is convex, there is a Kobayashi extremal for every pair of points,
 and  by a theorem of L. Lempert \cite{lem81}, for every Kobayashi extremal
  $f: \D \to \O$ for the pair $(\lambda,\mu)$ there is a 
 Carath\'eodory extremal $\phi: \O \to \D$ for the pair that is a left-inverse to $f$, \ie
$ \phi \circ f = {\rm id} |_\D$ .

 The range of a Kobayashi extremal is called a {\em geodesic}.
A pair of points $\lambda = (\lambda_1, \lambda_2)$ and $\mu = (\mu_1, \mu_2)$ in $\D^2$ is called {\em balanced}
if $\rho(\lambda_1, \mu_1) = \rho(\lambda_2,\mu_2)$. The Kobayashi extremal is unique (up to precomposition with a M\"obius map)
if and only if $\lambda$ and $\mu$ are balanced. A key part of the proof in \cite{agmcvn} was to show
that if a set with the polynomial extension property contained a balanced pair of points, then it contained the entire geodesic
containing these points.
We give a new proof of Theorem~\ref{thma1} in Section~\ref{secc}.
%but with the additional assumption that $\calv$ is connected.

 In \cite{ghw08}, K. Guo, H. Huang and K. Wang proved
 that the answer to Question \ref{qa1} is  yes if
$\O$ is the tridisk, $\calv$ is the intersection of an algebraic set with $\D^3$, and in addition the polynomial  extension operator
is given by a linear operator  $L$  from $H^\i(\calv)$ to $H^\i(\D^3)$. 
(This is called the {\em strong extension property} in \cite{rud69}).
The principle focus of this paper is to examine what happens for the tridisk without the assumption
that there is a linear extension operator.

For the polydisk, all retracts are described by
the following theorem of L. Heath and T. Suffridge \cite{hs81}:
\bt
\label{thma2}
The set $\calv$ is a retract of $\D^d$ if and only if, after a permutation of coordinates, 
$\calv$ is the graph of a map from $\D^n$ to $\D^{d-n}$ for some $ 0 \leq n \leq d$.
\et
Any relatively polynomially convex subset of $\D^3$ with the polynomial extension property is a holomorphic
subvariety (Lemma \ref{lemb1}), so is of dimension $0,1,2,$ or $3$.
The only $3$-dimensional holomorphic subvariety of $\D^3$ is $\D^3$.
The only $0$-dimensional sets with the extension property are singletons (Lemma \ref{lemb2}).
So we just have to consider
the cases when $\calv$ has dimension  $1$  and $2$.
In Section \ref{secd} in Theorems~\ref{thmd1} and \ref{thmg1} we show:
\bt
\label{thma3}
Let $\calv$ be a relatively polynomially convex subset of $\D^3$ that has the polynomial extension property and 
is one dimensional. If $\calv$ is algebraic or has polynomially convex projections, then $\calv$ is a retract of $\D^3$.
\et
In Section \ref{sece} in Theorem~\ref{thme1} we prove:
\bt
\label{thma4}
Let $\calv$ be a relatively polynomially convex subset of $\D^3$ that has the polynomial  extension property and 
is two dimensional. Then either $\calv$ is a retract, or, for each $r = 1,2,3$,  there is a domain $U_r \subseteq \D^2$
and a holomorphic function $h_r : U_r \to \D$ so that
\se\att
\begin{align}\label{eq:thma4}
\calv  =& \{ (z_1, z_2, h_3(z_1, z_2) ) : (z_1, z_2) \in U_3 \}\\\nonumber
=& \{ (z_1, h_2(z_1, z_3) , z_3) : (z_1, z_3) \in U_2 \}\\\nonumber
=& \{ ( h_1(z_2, z_3), z_2, z_3 ) : (z_2, z_3) \in U_1 \} .
\end{align}
\et
If we could show that one of the sets $U_r$ were the whole bidisk, then $\calv$ would be  a retract.
In Section~\ref{secg}, we show that the set
\[
\{ z \in \D^3: z_1 + z_2 + z_3 = 0 \} 
\]
does not have the polynomial extension property, although it does satisfy 
\eqref{eq:thma4}.

In Section~\ref{sech} we look at the spectral theory connections, and show
that a holomorphic subvariety $\calv \subseteq \D^d$ has the $\A$-extension property if and only it has the $\A$ von Neumann property.
Loosely speaking, the $\A$ von Neumann property means that any $d$-tuple of operators that ``lives on'' $\calv$
has $\calv$ as an $\A$ spectral set; we give a precise definition in Def. \ref{defvn}.

\vskip 10pt
In \cite{kmc17} it was shown that the answer to Question~\ref{qa1} 
is yes if $\O$ is the ball in any dimension, or in dimension $2$
if $\O$ is either strictly convex or strongly linearly convex.

There is one domain  for which the answer to Question~\ref{qa1} is known to be no.
This is the symmetrized bidisk, the  set $G := \{ (z+w, zw) : z,w \in \D \}$. 
 In \cite{aly16}, J. Agler, Z. Lykova and N. Young proved 
\begin{theorem}
 \label{thma5} 
 The set $\calv$ is an algebraic subset of $G$ having the $H^\i(\calv)$ extension property if and only if either
 $\calv$ is a retract of $G$, or $\calv = {\mathcal R} \cup {\mathcal D}_\beta$, where
 ${\mathcal R} = \{(2z,z^2) : z \in \D\}$ and ${\mathcal D}_\beta = \{ (\beta + \bar \beta z, z):
 z \in \D \}$, where $\beta \in \D$.
 \et
% Since $H^\i(\calv)$ contains the polynomials,
%Theorem~\ref{thma1} shows in particular that the answer to Question~\ref{qa1} is no for $G$.

\section{Preliminaries}
\label{secb}
\subsection{General Domains}

Note that if $\calv \subseteq \O$ is relatively polynomially convex, it is automatically $\hio$ convex.

We shall use the following assumptions throughout this section:

\begin{enumerate}
\item[(A1)]
{\em $\O$ is a bounded domain, and $\calv$ is a relatively polynomially convex
 subset of 
$\O$ that has the polynomial extension property}.
\item[(A2)]
{\em $\calv$ is an $\hio$ convex
 subset of 
$\O$ that has the $\hiv$ extension property}.
\end{enumerate}

The first lemma is straightforward.
\bl
\label{lemb1} If either (A1) or (A2) hold, then  $\calv$ is a holomorphic subvariety of $\O$.
\el
\bp
Under (A1), 
for every point $\lambda$ in $\O \setminus \calv$, there is a polynomial $p_\l$ such that
$|p_\l(\lambda)| > \| p \|_{\hiv}$. Let $\phi_\l$ be the norm preserving extension
of $p_\l$ from $\calv$ to $\O$. 
Then
\[
\calv \= \bigcap_{\l \in \O \setminus \calv} Z_{\phi_\l - p_\l} ,
\]
where we use $Z_f$ to denote the zero set of a function $f$.

Locally, at any point $a$ in $\calv$, the ring of germs of holomorphic functions is Noetherian
\cite[Thm. B.10]{gun2}. Therefore $\calv$ is locally the intersection of finitely many zero zets of functions
in $H^\i(\O)$, and therefore is a holomorphic subvariety.

Under (A2) the same argument works, where now $p_\l$ is in $\hio$ but not necessarily a polynomial.
\ep

The following lemma is a modification of \cite[Lemma 5.1]{agmcvn}.
%and the proof is essentially the same; 
%we include the proof for completeness.
% for the bidisk, but the proof immediately generalizes.
\bl
\label{lemb2}
If (A1) holds, then 
$\overline{\calv}$ is connected. If (A2) holds, then $\calv$ is connected.
%If $\calv$ is zero-dimensional, it is a single point.
\el
\bp
In the first case, consider the Banach algebra $P(\overline{\calv})$, the uniform closure of the polynomials in $C(\overline{\calv})$.
The maximal ideal space of $P(\overline{\calv})$ is $\overline{\calv}$ \cite[Thm. III.1.2]{gam}.
Assume  $E$ is a clopen proper subset of  $\overline{\calv}$. By the Shilov idempotent theorem \cite[Thm. III.6.5]{gam},
the characteristic function of $E$ is in $P(\overline{\calv})$. For each $n$, there exists a polynomial $p_n$ such that
$| p_n -1| < 1/n$ on $E$, and $|p_n| < 1/n$ on  $\overline{\calv} \setminus E$. 
By the extension property, there are functions $\phi_n$ of norm at most $1+1/n$ in $H^\i(\O)$ that extend $p_n$.
By normal families, there is a subsequence of these functions that converge to a function $\phi$ of norm
$1$ in $H^\i(\O)$ that is $1$ on $E \cap \O$ and $0$ on  $(\overline{\calv} \setminus E) \cap \O$.
Since $E \cap \O$ is non-empty,  by the maximum modulus theorem, $\phi$ must be constant,
a contradiction to $\overline{\calv} \setminus E$ being non-empty.
Therefore $\overline{\calv}$ is connected.

In the second case, if $E$ is a clopen subset of $\calv$, then the characteristic function of $E$ is in $\hiv$,
so has an extension to $\hio$, 
and the maximum modulus theorem yields that $\calv$ is connected. 
\ep
%\blue
%Can we show that $\calv$ is connected when we just assume the extension property, by looking at the
%weak-* closure of the polynomials in $\hiv$?
%\black

An immediate consequence of Lemma~\ref{lemb2} is that if $\calv$ is $0$-dimensional, then it is a single point.

In Section \ref{seca} we defined the Kobayashi and Carath\'eodory extremals for a pair of points $\l,\mu$ in a set $\O \subseteq \C^d$.
There is also an infinitesimal version, where one chooses one point $\lambda \in \O$ 
and a non-zero vector $v$ in the tangent space of $\O$ at $\l$.
A Kobayashi extremal is then a holomorphic map $f: \D \to \O$ such that $f(0) = \lambda$
and $Df(0) $ points in the direction of $v$ and has the largest magnitude possible
(or any such $f$ precomposed with a M\"obius transformation of $\D$). 
A Carath\'eodory extremal is a holomorphic map $\phi: \O \to \D$ such that $\phi(\l) = 0$ and
$D\phi(\l) [v]$ is maximal (or any such $\phi$ postcomposed with a M\"obius transformation).

%We shall treat both the two point and the infinitesimal versions simultaneously, and 
%call either a pair of distinct points  $(\l,\mu)$ or a point and a tangent vector $(\l,v)$ a
%{\em datum} for the corresponding extremal problem.
%The {\em value} of $\phi$ for the datum $\d$ will be $\rho(\phi(\l), \phi(\mu))$
%or $\| D \phi(\l) v \|$, respectively.
More generally, we shall say that \cp data consists of distinct points $\l_1, \dots \l_N$ in
$\O$, and, for each $1 \leq j \leq N$, between $0$ and $d$ linearly independent vectors
$v_j^k$ (thought of as tangent vectors in $T_{\l_j}(\O)$), and correspondingly $N$ points
$w_1, \dots , w_N$ in $\D$ and complex numbers $u_j^k$ (thought of as tangent vectors
in $T_{w_j}(\D)$). A \cp solution to this data is a holomorphic function $\phi: \O \to \D$
such that 
\beq
\phi(\l_j) & \=  &w_j \\
 D \phi(\l_j) v_j^k &=&  u_j^k \quad \forall j,k .
\eeq
We shall say that $\phi$ is a \cp extremal for some data if $\phi$ is a \cp solution, and no function
of $\hio$ norm less than $1$ is a solution.
If $\calv \subseteq \O$, we shall say that the data is contained in $\calv$ if
each $\l_j$ is in $\calv$ and for each $v_j^k$ there is a sequence of points $\mu_n$ in $\calv$ that
converge to $\l_j$ such that
\[
v_j^k \= \| v_j^k\| \,  \lim_{n \to \i} \frac{\l_j - \mu_n}{\|\l_j - \mu_n \|} .
\]

The next theorem is based on an idea of P. Thomas \cite{th03}.
Let $P(K)$ denote the uniform closure of the polynomials in $C(K)$.

\bt
\label{thmb1} Let $\O$ be  bounded, and assume that $\calv \subseteq \O$
has the polynomial extension property.
% Let $\d$ be a datum in $\calv$, and
Let $\phi$ be a \cp extremal for $\O$ for some data.
If $\phi |_\calv$ is in $P(\overline{\calv})$,
 then 
 $\overline{\phi(\calv)}$ contains the unit circle $\T$.
\et
\bp
Assume that $\overline{\phi(\calv)}$ omits some point on $\T$.

 Then there is a simply connected 
star-shaped
open set
 $U $ such that
\[
\overline{\phi(\calv)} \ \subseteq \ \overline{U}  \subsetneq \overline{\D} .
\]
(Take $U = \D \setminus \D((1 + \vare)e^{i \theta}, 2 \varepsilon)$ for 
suitably chosen
$\theta$ and $\varepsilon$).

Let  $h: U \to \D$ be a Riemann map, and $f : \D \to U$ be its inverse.

Consider the \cp problem on $\D$
 \beq
 g : w_j &\mapsto & f(w_j) \\
 g'(w_j) &=& f'(w_j) .
 \eeq
This can clearly be solved by $f$, so has some solution.
But it is well-known that the solution to every extremal
\cp problem on the disk is given by a unique finite Blaschke product. (See for instance \cite[Thms. 5.34, 6.15]{ampi}
or \cite[Sec. I.2]{gar81}), and the range of $f$ is contained in $U$.
So there is also a solution $g$ of norm strictly less than one, which can be taken to be a constant multiple
$r$  of a Blaschke
product.

Then $g \circ h \circ \phi$ is a solution to the original \cp problem on $\calv$, and
$\| g \circ h \circ \phi \| $ in $\hiv$ is   less than or equal to $r$.
Since $\phi \in P(\overline{\calv})$, for each $n$, there is a polynomial $p_n$ in $d$ variables
and a constant $C$ depending on $U$ 
so that $\| \phi - p_n \|_{\hiv} \leq \frac{C}{n}$ and $p_n(\calv) \subseteq (1- \frac{1}{n}) U$.
As $g \circ h$ can be uniformly approximated on $ (1- \frac{1}{n}) \overline{U}$ by a sequence of polynomials $q_n$, the sequence 
$q_n \circ p_n \in \C[z_1, \dots, z_d]$ are polynomials that converge uniformly to $g \circ h \circ \phi$
on $\calv$. Each such polynomial can be extended to a function $\psi_n$ in $\hio$ with
$\| \psi_n \|_{\hio} = \| q_n \circ p_n \|_\hiv$.
Finally, by normal families, a subsequence of $\psi_n$ will converge to a function $\psi$ 
in $\hio$ of norm at most $r$ that solves the original \cp problem. This contradicts the assumption
that $\phi$ was an extremal.
\ep

\subsection{Balanced Points in the polydisk}
Let $\O$ now be the polydisk, $\D^d$. 
The automorphisms of $\D^d$ are precisely the maps
\[
\l \ \mapsto \ (\psi_1(\l_{i_1}), \dots, \psi_d(\l_{i_d}) ) ,
\]
where $(i_1, \dots, i_d)$ is some permutation of $(1,\dots, d)$ and each $\psi_j$ is a
M\"obius map \cite[p.167]{rud69}.
The properties of being a retract, being connected, being relatively polynomially convex, and having the
 polynomial extension property, are all invariant with respect to automorphisms of $\D^d$.
The last assertion is because any polynomial composed with an automorphism is in $P(\overline{\D^d})$,
the uniform closure of the polynomials. We shall often use this to move points to the origin for
convenience.

\begin{defin}
A pair of distinct points $(\l,\mu)$ in $ \D^d$ is called $n$-balanced, for $1 \leq n \leq d$, if, 
for some permutation  $(i_1, \dots, i_d)$  of $(1,\dots, d)$, we have
\[
\rho(\l_{i_1}, \mu_{i_1})   = \dots = \rho(\l_{i_n}, \mu_{i_n}) \geq \rho(\l_{i_{n+1}}, \mu_{i_{n+1}}) \geq \dots \geq 
\rho(\l_{i_d}, \mu_{i_d}) .
\]
We shall say the pair is $n$-balanced w.r.t. the first $n$ coordinates if $(i_1, \dots, i_n) = (1, \dots, n)$.
\end{defin}
If a pair is $n$-balanced, we can always permute the coordinates so that it is $n$-balanced w.r.t. the first $n$ coordinates.
Let $\pi_{n} : \C^d \to \C^n$ be  projection onto the coordinates $z_{1},\dots, z_{n}$.
%Let $\pi_{i_1, \dots i_n} : \C^d \to \C^n$ be  projection onto the coordinates $z_{i_1},\dots, z_{i_n}$.

A pair of points is $d$-balanced if and only if there is a unique Kobayashi geodesic passing through them.
The Carath\'eodory extremal is unique (up to a M\"obius transformation) if and only if the pair is not 2-balanced.
Theorem~\ref{thmb1} has the following important consequence.
%, which was a key ingredient in the proof of 
%Theorem \ref{thma1} in \cite{agmcvn}.
\begin{theorem}
%\label{thmb2}
\label{thmb4}
Suppose $\calv$ is a set
%relatively polynomially convex set 
that has the polynomial extension property with respect to
$\D^d$. Suppose $\calv$  contains  a pair of points  $(\l,\mu)$  that is $n$-balanced
w.r.t. the first $n$ coordinates.
If $\pi_n(\calv)$ is relatively polynomially convex in $\D^n$, then
%. Then, after a permutation of coordinates,
$\pi_{n} (\calv)$ contains an $n$-balanced disk of the form
\[
\{(\psi_1(\zeta), \dots, \psi_n(\zeta) ): \zeta \in \D \}
\]
for some M\"obius transformations $\psi_1, \dots, \psi_n$.
\end{theorem}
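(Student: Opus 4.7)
My plan is to apply Theorem~\ref{thmb1} to a single cleverly chosen \cp extremal and then to use the polynomial convexity of $\overline{\pi_n(\calv)}$ to propagate from the distinguished boundary back into $\D^n$. First I would normalize by an automorphism of $\D^d$: choose M\"obius maps $\sigma_1,\dots,\sigma_d$ with $\sigma_i(\l_i)=0$, and for $i\leq n$ further rotate $\sigma_i$ so that $\sigma_i(\mu_i)=r$, where $r=\rho(\l_1,\mu_1)=\cdots=\rho(\l_n,\mu_n)$. After replacing $\calv$ by $\sigma(\calv)$ with $\sigma=(\sigma_1,\dots,\sigma_d)$, I may assume $\l=0$ and $\mu=(r,\dots,r,\mu_{n+1},\dots,\mu_d)$ with $|\mu_j|\leq r$ for $j>n$. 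None of the hypotheses (polynomial extension property, relative polynomial convexity of $\pi_n(\calv)$, $n$-balancedness) is affected by $\sigma$, and if I can exhibit the diagonal disk $\{(\zeta,\dots,\zeta):\zeta\in\D\}$ inside $\pi_n(\calv)$ after normalization, then undoing $\sigma$ produces the disk in the statement with $\psi_i=\sigma_i^{-1}$.

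The second step is to exploit the polynomial
\[
\phi(z)\=\frac{z_1+z_2+\cdots+z_n}{n},
\]
which sends $\D^d$ into $\D$, takes $\l=0$ to $0$ and $\mu$ to $r$, and is therefore a Carath\'eodory extremal for the pair $(\l,\mu)$ in $\D^d$. Because $\phi$ is a polynomial we have $\phi|_{\calv}\in P(\overline{\calv})$, so Theorem~\ref{thmb1} yields $\overline{\phi(\calv)}\supseteq\T$. Given any $\zeta\in\T$, I would pick $z^{(k)}\in\calv$ with $\phi(z^{(k)})\to\zeta$ and extract a subsequential limit $\tilde z\in\overline{\calv}\subseteq\overline{\D^d}$ satisfying $(\tilde z_1+\cdots+\tilde z_n)/n=\zeta$; strict convexity of $\overline{\D}$ then forces $\tilde z_1=\cdots=\tilde z_n=\zeta$, giving $(\zeta,\dots,\zeta)\in\pi_n(\overline{\calv})\subseteq\overline{\pi_n(\calv)}$ for every $\zeta\in\T$.

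The final step would invoke the relative polynomial convexity of $\pi_n(\calv)$, which supplies both that $\overline{\pi_n(\calv)}$ is polynomially convex and that $\overline{\pi_n(\calv)}\cap\D^n=\pi_n(\calv)$. The polynomial hull of $\{(\zeta,\dots,\zeta):\zeta\in\T\}\subseteq\C^n$ is the closed diagonal disk $\{(\zeta,\dots,\zeta):\zeta\in\overline{\D}\}$, as one sees by separating coordinates via the polynomials $z_i-z_j$ and bounding the modulus by $z_1$, so $\overline{\pi_n(\calv)}$ must contain this whole disk. Intersecting with $\D^n$ places the open diagonal disk inside $\pi_n(\calv)$, and undoing $\sigma$ closes the argument. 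The one delicate point is the strict-convexity step that collapses the first $n$ coordinates to a single common value on $\T$; after that, everything is formal.
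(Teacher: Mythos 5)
Your proof is correct and follows essentially the same route as the paper: normalize by an automorphism, observe that $\phi(z)=(z_1+\cdots+z_n)/n$ is a Carath\'eodory extremal by the Schwarz lemma, apply Theorem~\ref{thmb1} to place the diagonal circle in $\pi_n(\overline{\calv})$ via strict convexity of $\overline{\D}$, and finish with polynomial convexity. You in fact spell out two steps the paper leaves implicit (the strict-convexity collapse of the first $n$ coordinates and the computation of the hull of the diagonal circle), so there is nothing to correct.
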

\bp
By composing with an automorphism of $\D^d$, we can assume that 
\[
\rho(\l_{1}, \mu_{1})   = \dots = \rho(\l_{n}, \mu_{n}) \geq \rho(\l_{{n+1}}, \mu_{{n+1}}) \geq \dots \geq 
\rho(\l_{d}, \mu_{d}) ,
\]
 that $\mu = 0$, and that $\l_j \geq 0$ for each $j$.
%Let $\d = (\l,\mu)$, and $\phi(z) = \frac{1}{n}(z_1 + \dots z_n)$.
%By the Schwarz lemma, $\phi$ is a Carath\'eodory extremal for $\d$, so by Theorem~\ref{thmb1},
%$\overline{ \pi_{1,\dots,n} (\calv )} = \pi_{1,\dots,n} (\overline{\calv})$ contains the unit circle
%$\{ (\tau, \dots, \tau) : |\tau | = 1 \}$.
%Since $\overline{\calv}$ is polynomially convex, so is $\pi_{1,\dots,n}(\overline{\calv})$, and therefore 
%$\pi_{1,\dots,n}(\calv)$ contains $\{ (\zeta, \dots, \zeta) : \zeta \in \D \}$.
Let  $\phi(z) = \frac{1}{n}(z_1 + \dots z_n)$.
By the Schwarz lemma, $\phi$ is a Carath\'eodory extremal for the pair $(\l,\mu)$, so by Theorem~\ref{thmb1},
$\overline{ \pi_{n} (\calv )} = \pi_{n} (\overline{\calv})$ contains the unit circle
$\{ (\tau, \dots, \tau) : |\tau | = 1 \}$.
Since  $\pi_{n}(\overline{\calv})$ is polynomially convex, 
$\pi_{n}(\calv)$ contains $\{ (\zeta, \dots, \zeta) : \zeta \in \D \}$.
\ep

There is an infinitesimal version of Theorem~\ref{thmb4}, most conveniently expressed when we use
an automorphism to move the point of interest to the origin. It is proved from Theorem~\ref{thmb1} 
in the same way.
\bt
%\label{thmb3}
\label{thmb5}
Suppose $\calv$ has the polynomial extension property with respect to
$\D^d$, and $0 \in \calv$. Suppose there is a non-zero tangent vector $v \in T_0(\calv)$ such that
$$
|v_1| = \dots =  |v_n| \geq |v_{n+1} | \geq \dots \geq |v_d| .
$$
If   $\pi_{n}(\overline{\calv})$ is relatively polynomially convex,
then it
 contains the disk
$$
\{ (\z,\omega_2 \z, \dots , \omega_n \z) : \z \in \D \}
$$ for some unimodular $\omega_2, \dots, \omega_n$.
\et

\section{The bidisk}
\label{secc}

In this section we will take our domain $\O$ to be the bidisk $\D^2$, and make the following assumption 
about $\calv \subseteq \D^2$:

(A3) $\calv$ is relatively polynomially convex  and has the polynomial extension property w.r.t. $\D^2$.

We shall let 
\[
m_a(z) \= \frac{a-z}{1 - \overline{a} z}
\]
be the M\"obius map that interchanges $a$ and $0$.
A subset of $\D^2$ is called balanced if, whenever it contains a $2$-balanced pair of points, it contains the
entire geodesic through these points.

%If $\l,\mu)$ is a pair of points in $\D^2$ that is not balanced, then the Carath\'eodory extremal is
%essentially unique, and is give by a M\"obius map in either the first or second coordinate (whichever
%has larger value). If the pair is balanced, there is a large set of Carath\'eodory extremals,
%but then the Kobayashi extremal becomes essentially unique.

Let 
\beq
R_1 &\=& \{ (z_1, z_2) \in \D^2\ : \ |z_2| < |z_1| \} \\
R_2 &\=& \{ (z_1, z_2) \in \D^2\ : \ |z_1| < |z_2| \} \\
D_\omega &=& \{ (\zeta, \omega \zeta) \ : \ \zeta \in \D \} .
\eeq

A subset of $\D^2$ is called balanced if, whenever it contains a balanced pair of points, it contains the
entire geodesic through these points.
%\bl
%\label{lemc1}
%Under Assumption (A3),  $\calv$ is balanced.
%\el
%\bp
%By moving one point to the origin with an automorphism we can assume that the balanced
%pair is $(0,\l)$, where $\l_2 = \omega \l_1$ for some unimodular $\omega$.
%The function 
%\[
%\phi(z) \= \frac{1}{2} ( z_1 + \overline{\omega} z_2) 
%\]
%is a Carath\'eodory extremal for $(0,\l)$, so 
%by Theorem~\ref{thmb1}, $\overline{\phi(\calv)}$ contains $\T$.
%Therefore $\overline{\calv}$ contains the circle $\{ (e^{i\theta}, \omega e^{i\theta}) : 0 \leq \theta \leq 2 \pi\}$,
%and so by polynomial convexity, $\calv$ contains $D_\omega$.
%\ep

\bt
\label{thmc1}
Assume that $\calv$ is relatively polynomially convex  and has the polynomial extension property w.r.t. $\D^2$.
%Under assumption (A3), if $\calv$ is connected,
Then $\calv$ is a retract of $\D^2$.
\et
\bp
Without loss of generality, we can assume $0 \in \calv$.
By Lemma~\ref{lemb1} we know that $\calv$ is a holomorphic subvariety,
and by Lemma~\ref{lemb2}, if it is $0$-dimensional, it is a point,
so we shall assume it is not $0$-dimensional.

Step 1: If $\calvo$ is a connected component of $\calv$, 
and if $\calvo \cap R_1$ and $\calvo \cap R_2$ are both non-empty, then
$\calvo$ contains $D_\omega$ for some unimodular $\omega$.

If $\calvo$ contains a non-zero point $\l$ with $|\l_1| = |\l_2|$, then 
the pair $(0,\l)$ is $2$-balanced, so
by
Theorem~\ref{thmb4}
we get some $D_\omega$ in $\calv$ and therefore in $\calvo$.
Otherwise, since $\calvo$ is connected, there are sequences $(z_n)$ and $w_n$ tending
to $0$ in $\C$, and numbers $a_n$ and $b_n$ in $\overline{\D}$, so that
$(z_n, a_n z_n) $ and $(b_n w_n, w_n)$ are in $\calvo$.
Passing to a subsequence, we can assume that $a_n$ converges to $a$ and $b_n $ converges to $b$.
If $ab = 1$, choose non-zero $\alpha$ and $\beta$ so that $|\alpha| + |\beta| = 1$
and $\alpha + a \beta = 1$.
Otherwise, let $\alpha$ and $\beta$ be any non-zero numbers such that  $|\alpha| + |\beta| = 1$.
Let $\phi(\l) = \alpha \l_1 + \beta \l_2$.
We will show that $\overline{\phi(\calv)}  \supseteq \T$ by 
%a similar argument to the proof
 Theorem~\ref{thmb1}.

Let $v^1 =  (1, a)^t$  and $v^2 = (b,1)^t$ be tangent vectors at $0$.
We claim that $\phi$ is extremal for the \cp problem on $\D^2$
\se\att
\begin{eqnarray}
\nonumber
\psi(0) &\= & 0 \\
\nonumber
D \psi(0) v^1 &=& \alpha + \beta a \\
\label{eqc10}
D \psi(0) v^2 &=& \alpha b + \beta
\end{eqnarray}
Indeed, if $v^1$ and $v^2$ are linearly independent, then  
\eqref{eqc10} determines that $D \psi(0) = (\alpha\  \beta)$, so $\phi$ is extremal by the
Schwarz lemma. If they are not, which occurs when $ab =1$, then our choice that $\alpha + a \beta = 1$
still yields $\phi$ is extremal (though no longer the unique solution).

So by Theorem~\ref{thmb1}, we get  $\overline{\phi(\calv)}  \supseteq \T$, and since
$\calv$ is relatively polynomially convex, 
with 
\[
\omega \= \frac{\alpha |\beta|}{\beta |\alpha |} 
\]
we get $D_\omega \subseteq \calvo$.

\vs
Step 2: If $D_\omega \subsetneq \calv$, then $\calv = \D^2$.

Let $\l \in \calv \setminus D_\omega$. There exists some point $\mu$ in $D_\omega$
so that $(\l, \mu)$ is 2-balanced, so by Theorem~\ref{thmb4}, 
 $\calv$ contains two intersecting balanced geodesics.
Composing with an automorphism, we can assume that they intersect at $0$, so that $\calv$ contains
$D_\omega$ and $D_\eta$ for two different unimodular numbers $\omega$ and $\eta$.
Now we repeat the argument in the proof of Step 1 with $a = \omega$ and $b = \overline{\beta}$.
Since $ab \neq 1$, we can choose any $\alpha$ and $\beta$ whose moduli sum to $1$, so we
get that $\calv$ contains $D_\tau$ for every unimodular $\tau$. As $\calv$ is a holomorphic variety, it must be all of $\D^2$.

\vs
After a permutation of coordinates, we can now assume that $\calv \subseteq R_1$.
Let $\pi_1$ be projection onto the first coordinate.

\vs
Step 3: If $\calvo$ is a connected component of $\calv$, then for every $z \in \D$, the set $\pi_1^{-1} (z) \cap \calvo$ contains at most one element.

Otherwise $(z,w_1)$ and $(z,w_2)$ are distinct points in $\calvo$. Composing with the automorphism
of $\D^2$ that sends $$
(0,0), (z,w_1), (z,w_2) \ \mapsto \ (z,w_1), (0,0), (0, m_{w_1}(w_2))
$$ respectively,
we are in the situation of Step 1, and hence by Step 1 and Step 2, $\calv = \D^2$.

\vs
Step 4: If $\calv$ is connected, $\calv$ is a retract.

By Step 3, the only remaining case is when $\calv = \{ (z , f(z) ) : z \in U \}$ where
$U \subseteq \D$ and $f: U \to \D$ satisfies $|f(z)| < |z|$ if $z \neq 0$.
Since $\calv$ is a holomorphic subvariety of $\D^2$, we must have $U = \D^2$ and $f$ is holomorphic.

\vs

Step 5: The set $\calv$ has to be connected. 

It is sufficient to consider the case when it is one-dimensional.
By Steps 1 and 3, $\calv$ cannot have any branch points, so must be a disjoint union of single sheets.
It cannot contain any $2$-balanced pairs, or we are done by Step 2. Assuming $0 \in \calv$, this means that, after a permutation of coordinates if
necessary, there is some sheet
$\S = \{(z, f(z)) : z \in \D \}$ in $\calv$, where $|f(z)| < |z|$ if $z \neq 0$, and no point on
$D_\omega \setminus \{ 0 \}$ for any unimodular $\omega$.
By Lemma~\ref{lemc1}, $\S$ must be all of $\calv$, for otherwise
$\calv$ would contain a 2-balanced pair.
\ep

%We must show that $\S$ is all of $\calv$.
%Let $(z_1, w_1)$ be any point in $\calv \setminus \S$.
%Composing with the automorphism $(m_{z_1}, m_{f(z_1)})$ we can assume that
%$(0,w_1) \in \calv \setminus \S$, and $\S = \{ (z, g(z) ) : z \in \D \}$, where
%\be
%\label{eqc1}
%g(z) \= m_{f(z_1)} \circ f \circ m_{z_1} (z) .
%\ee
%Differentiating \eqref{eqc1}, we get
%\[
%| g'(0)| \= \frac{1-|z_1|^2}{1-|f(z_1)|^2} |f'(z_1)|.
%\]
%By the Schwarz-Pick Lemma \cite[p.264]{gam01}, this last quantity is strictly less than one,
%unless $g$ is a M\"obius map (in which case we are done by Step 2).
%So $\S$ still lies in $R_1$, and we have $|g(z) | < |z|$ for all $z \neq 0$.
%
%Since $\calv$ is assumed to have no $2$-balanced pairs, we must have that for all $z$ in $\D$,
%\be
%\label{eqc2}
%\rho(g(z), w_1) \ > \ \rho(z, 0)  .
%\ee
%
%Let $ 1 > r > |w_1|$, and consider $\{g(r e^{i \theta}) : 0 \leq \theta \leq 2\pi \}$.
%By \eqref{eqc2}, this set must lie outside the pseudohyperbolic disk centered at $w_1$ of radius $r$, and
%inside the disk centered at $0$ of radius $r$ by the Schwarz Lemma. By the argument principle, this would mean that
%$g$ has no zero in $\D(0,r)$, a contradiction. 
%
%So there must be some point in $\S$ where the inequality in \eqref{eqc2} becomes equality, and therefore
%$\calv$ contains a balanced disk by Theorem~\ref{thmb4}.
%\ep

\bl
\label{lemc1}
Suppose $X \subseteq \D^2$ contains the set
$\S = \{(z, f(z)) : z \in \D \}$  where $f : \D \to \D$ is a holomorphic function satisfying $f(0) = 0$,
% $|f(z)| < |z|$ if $z \neq 0$,
and $\S$ is not all of $X$.
Then $X$ contains a 2-balanced pair.
\el
\bp
Let $(z_1, w_1)$ be any point in $X \setminus \S$.
Composing with the automorphism $(m_{z_1}, m_{f(z_1)})$ we can assume that
$(0,w_1) \in X \setminus \S$, and $\S = \{ (z, g(z) ) : z \in \D \}$, where
\be
\label{eqc3}
g(z) \= m_{f(z_1)} \circ f \circ m_{z_1} (z) .
\ee

If $X$ has no $2$-balanced pairs, we must have that for all $z$ in $\D$,
\be
\label{eqc2}
\rho(g(z), w_1) \ > \ \rho(z, 0)  .
\ee

Let $ 1 > r > |w_1|$, and consider $\{g(r e^{i \theta}) : 0 \leq \theta \leq 2\pi \}$.
By \eqref{eqc2}, this set must lie outside the pseudohyperbolic disk centered at $w_1$ of radius $r$, and
inside the disk centered at $0$ of radius $r$ by the Schwarz Lemma (since $g(0) = 0$). By the argument principle, this would mean that
$g$ has no zero in $\D(0,r)$, a contradiction. 
\ep

\section{$\calv$ is one-dimensional with polynomially convex projections}
\label{secd}

In this section we take $\O = \D^3$.
We make the following assumption about $\calv \subseteq \D^3$:

(A4) The set $\calv$  has the polynomial extension property with respect to $\D^3$,
is one-dimensional, 
and both $\calv$ and  $\pi(\calv)$ are relatively polynomially convex for every projection $\pi$  onto
two of the  coordinate functions.

%\blue
%I think we can change $\D^3$ to $\D^d$ in this section, if we require
%$\pi \calv$ is  relatively polynomially convex, for every projection $\pi$  onto
%two or more of the  coordinate functions.
%\black

\bt
\label{thmd1}
If $\calv$ satisfies (A4), then it is a retract of $\D^3$.
\et

%
%\blue
%If $\calv$ is an algebraic variety, does it follow that $\pi \calv$ is polynomially convex?
%\black

We shall prove Theorem~\ref{thmd1} in a series of 3 Lemmas, \ref{lemd1}, \ref{lemd2} and \ref{lemd3}.
Composing with automorphisms of $\D^3$, we can assume without loss of generality that $0 \in \calv$,
and since $\calv$ is a holomorphic subvariety, we can also assume that $0$ is a regular point.
Thus, there are germs $f_2,  f_3$ such that $\calv$ coincides  with $\{(\zeta, \zeta f_2(\zeta),  \zeta f_3(\zeta))\}$ in a neighborhood of $0$. 
Permuting the coordinates we may also assume that $|f_j'(0)|$  are less than or equal to $1$ for each $j$.
 Let $\calvo$ be the component of $\calv$ containing $0$.

Recall that $\pi_{2} : \C^3 \to \C^2$ is projection onto the first two coordinates.

\begin{lemma}
\label{lemd1}
Either $\calvo$ is a retract of $\D^3$, or,
up to a composition with an automorphism of $\D^3$, the set
 $\{(\z, \z):\z \in \mathbb D\}$ is contained in $\pi_{2}(\calvo)$.
\end{lemma}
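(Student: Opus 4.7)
The plan is to split according to the tangent direction $v = (1, a, b)$ of $\calvo$ at the origin, where $a := f_2(0)$ and $b := f_3(0)$ satisfy $|a|, |b| \le 1$ by the normalization. After swapping coordinates $2$ and $3$ I may assume $|a| \ge |b|$, so everything hinges on whether $|a|=1$ or $|a|<1$.

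Suppose $|a|=1$. Then $|v_1|=|v_2|\ge |v_3|$, which is the hypothesis of the infinitesimal Theorem~\ref{thmb5}. Since (A4) grants that $\pi_2(\calv)$ is relatively polynomially convex, that theorem (with $n=2$) produces a disk $\{(\zeta,\omega\zeta):\zeta\in\D\} \subseteq \pi_2(\overline\calv)$ for some unimodular $\omega$. Tracing the proof (which applies Theorem~\ref{thmb1} to the Carath\'eodory extremal $\phi(z)=\frac{1}{2}(z_1+\overline a z_2)$, whose modulus-one values on $\overline{\D^3}$ occur only along $\{(\tau,a\tau,z_3):\tau\in\T,\ z_3\in\overline\D\}$) forces $\omega = a$. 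Composing with the automorphism $(z_1,z_2,z_3)\mapsto(z_1,\overline a z_2,z_3)$ normalizes the disk to $D:=\{(\zeta,\zeta):\zeta\in\D\}$, and since $\pi_2(\calv)=\pi_2(\overline\calv)\cap\D^2$ by relative polynomial convexity, $D\subseteq\pi_2(\calv)$. Upgrading to $\pi_2(\calvo)$ is the delicate step: near $0$ the restriction $\pi_2|_{\calvo}$ is a local biholomorphism onto a neighborhood of $(0,0)$ in $D$ (the tangent of the local projection matches that of $D$), so the connected component of $\pi_2^{-1}(D)\cap\calv$ through $0$ lies inside $\calvo$. An open/closed argument, using that $\calvo$ is closed in $\D^3$ and that the fibers of $\pi_2|_{\calv}$ over $D$ are discrete, then shows this component projects onto all of $D$.

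Suppose instead $|a|<1$. Then $|v_1|>|v_2|,|v_3|$ strictly, and I claim $\calvo$ is a retract. By Heath--Suffridge (Theorem~\ref{thma2}), it suffices to exhibit $\calvo$ as a graph over the first coordinate, i.e.\ to show $\pi_1|_{\calvo}:\calvo\to\D$ is a biholomorphism. Near $0$ this is the implicit function theorem. For global injectivity, if two distinct points of $\calvo$ shared a first coordinate, an automorphism would move them to $0$ and $(0,w_2,w_3)\neq 0$, producing a $2$-balanced pair in a two-coordinate subprojection of $\calv$; Theorem~\ref{thmb4} would then inject a full disk into that two-coordinate projection, making $\calv$ two-dimensional and contradicting the hypothesis. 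Surjectivity of $\pi_1|_{\calvo}$ follows from an open/closed argument using local biholomorphicity at every point (the previous argument applies near any $\mu\in\calvo$) together with the closedness of $\calvo$ in $\D^3$. Writing the inverse as $\zeta\mapsto(\zeta,h_2(\zeta),h_3(\zeta))$ with $h_j:\D\to\D$ holomorphic, the map $r(z):=(z_1,h_2(z_1),h_3(z_1))$ retracts $\D^3$ onto $\calvo$.

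The main obstacle in both cases is bridging the gap between the infinitesimal information at $0$ and a global conclusion about $\calvo$. In the $|a|=1$ case, one must carefully identify the connected component of $\pi_2^{-1}(D)\cap\calv$ through $0$ as a full covering of $D$ rather than a proper analytic subset, and justify that it is contained in $\calvo$ (not in some other component of $\calv$). In the $|a|<1$ case, one must rule out hidden branching or boundary escape of $\calvo$ far from the origin, which again forces the extension property to be used globally rather than just at $0$.
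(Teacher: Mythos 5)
Your first case ($|a|=1$) is essentially the paper's first step: apply the infinitesimal Theorem~\ref{thmb5} with $n=2$ and use relative polynomial convexity of $\pi_{2}(\calv)$. The real problems are in your second case, where you claim that $|v_1|>|v_2|,|v_3|$ forces $\calvo$ to be a retract. That is not the correct dichotomy, and two of your steps fail. First, the tangent condition at $0$ says nothing about points of $\calvo$ far from $0$: some $\l\in\calvo\setminus\{0\}$ may satisfy $|\l_2|\geq|\l_1|$ or $|\l_3|\geq|\l_1|$. In that situation $\pi_1|_{\calvo}$ need not be proper, so your open/closed argument for surjectivity breaks down (a graph $\{(\z,h_2(\z),h_3(\z))\}$ over a proper subdomain $U\subsetneq\D$ is closed in $\D^3$ and locally biholomorphic under $\pi_1$, yet $\pi_1(\calvo)=U\neq\D$, because points escape through the boundary in the second or third coordinate). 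The paper confronts exactly this: if $\calvo\setminus\{0\}$ is not contained in $R=\{|\l_2|,|\l_3|<|\l_1|\}$, then a point of $\calvo\cap\partial R$ together with $0$ is a genuine $2$-balanced pair, and Theorem~\ref{thmb4} puts a balanced disk into a two-coordinate projection --- i.e., the \emph{second} alternative of the lemma holds, not the first. You must either prove containment in $R$ or accept the other branch of the dichotomy; you do neither.

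Second, your injectivity argument is wrong on two counts. The pair $0$ and $(0,w_2,w_3)$ is $2$-balanced only when $|w_2|=|w_3|$ (the coordinate distances are $0,|w_2|,|w_3|$, and $2$-balanced requires the two largest to coincide), so moving two points with equal first coordinates to this configuration does not by itself produce usable \cp data. The paper has to work harder here: it applies the argument-principle Lemma~\ref{lemc1} to the sheet through $0$ and $z$ (which stays in $R$) and the extra point $\mu=(0,\mu_2,\mu_3)$ to manufacture a point on the sheet that is $2$-balanced with respect to $\mu$. And even once a $2$-balanced pair is found, Theorem~\ref{thmb4} yields a \emph{one-dimensional} disk $\{(\psi_1(\z),\psi_2(\z)):\z\in\D\}$ inside a two-coordinate projection of $\calv$; this is entirely consistent with $\dim\calv=1$ (indeed $\calv=\{(\z,\z,f(\z))\}$ realizes it), so your claimed contradiction ``making $\calv$ two-dimensional'' is spurious. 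The correct conclusion of that branch is again the second alternative of the lemma, which is precisely why the statement is a dichotomy rather than ``$\calvo$ is a retract.''
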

\bp
 If $|f_j'(0)| =1$ for  $j = 2$ or $3$, we are done by Theorem~\ref{thmb5}.
 So we shall assume that  they are all less than $1$.
 Let
 \[
 R \ := \ 
 \{ \l \in \D^3 : |\l_j | < |\l_1| \  {\rm for\ } j=2,3 \}.
 %{\rm for\ every\ }2 \leq j \leq d \} .
 \]
If $\calvo \setminus \{ 0 \}$ is not contained in $R$, then there is a point in $\calvo \cap \partial R$ that
is $2$-balanced with respect to $0$, so 
we are finished by Theorem~\ref{thmb4}.
So assume that $\calvo \setminus \{ 0 \}$ is contained in $R$, and that $\calvo$ is not single-sheeted over the
first coordinate,
so it contains $z = (z_1, z_2, z_3)$ and $(z_1, w_2,  w_3)$ where $(z_2, z_3) \neq (w_2, w_3)$.
Moreover, we can assume that $\calvo$ is regular at $z$, since the singular points are of dimension $0$.

Composing with an automorphism that interchanges $0$ and $(z_1, z_2,  z_3)$, we can assume that
$\calvo$ contains $0, z$ and a point $ \mu = (0, \mu_2, \mu_3)$,
and that there is a sheet
 $\{(\zeta, \zeta g_2(\zeta),   \zeta g_3(\zeta))\}$ passing through $0$ and $z$ that
 stays inside $R$.
 By Lemma~\ref{lemc1}, we conclude that 
 there must be a point in  the sheet $\{ (\zeta, \zeta g_2(\zeta),  \zeta g_3(\zeta))
: \zeta \in \D \}$  that is $2$-balanced with respect to $\mu$.
% But now we repeat the argument of Step 5 of Theorem~\ref{thmc1}
% and conclude that 
%as you move along the sheet $\{ (\zeta, \zeta g_2(\zeta),  \zeta g_3(\zeta))
%: \zeta \in \D \}$, there must be a point that is $2$-balanced with respect to $\mu$.
So again Theorem~\ref{thmb4} finishes the proof.
\ep
 
 \bl
 \label{lemd2}
 Suppose $\{(\z, \z):\z \in \mathbb D\}$ is contained in $\pi_{2}(\calvo)$.
 Then there is a holomorphic $f: \D \to \D$ such that
 $\{ (\z,\z, f(\z)) :\z \in \mathbb D\} \subseteq \calvo$.
 \el
 \bp
 Let $\calw = \{ (\z,w) : (\z,\z, w ) \in \calvo \}$.
This is a one-dimensional variety.
Let $\calwo$ be the connected component of $0$.
If $\calwo$ contains a point in $\{ |\z| = |w| \}$ then $\calvo$ contains a 3-balanced point, and we are done.
We assumed $|f_3'(0)| \leq 1$; if equality obtains, then for some unimodular $\omega$ we would have
$\phi(\l) = \frac{1}{3}(\l_1 + \l_2 + \omega \l_3 )$ would satisfy the hypotheses of
 Theorem~\ref{thmb1}, and polynomial convexity would again give a 3-balanced disk in $\calvo$.

So we can assume $\calwo \subseteq \{ |w| < | \zeta| \}$.
Either $\calwo$ is single sheeted over $\z$, and we are done, or as in Lemma~\ref{lemd1}
we find two distinct regular points $(\z,w_1)$ and $(\z, w_2)$. Composing with the automorphism
$\alpha = (m_{\z}, m_{w_1})$, we get the points $(0,0), (\z, w_1) $ and $(0, m_{w_1}(w_2))$ all in 
$\alpha(\calwo)$, and
by Lemma~\ref{lemc1}
we get $\calwo$ contains a $3$-balanced pair.
\ep

\bl
\label{lemd3}
Suppose $\{(\z, \z):\z \in \mathbb D\}$ is contained in $\pi_{2}(\calvo)$.
 Then there is a holomorphic $f: \D \to \D$ such that
 $\{ (\z,\z, f(\z)) :\z \in \mathbb D\} = \calv$.
 \el
 \bp
 By Lemma~\ref{lemd2}, we have 
  $\S := \{ (\z,\z, f(\z)) :\z \in \mathbb D\}$ is a  subset of $ \calv$. Suppose the containment is proper,
  and there exists $(z_1,z_2,z_3) \in \calv \setminus \S$. 
  If $z_2 = z_1$, let  $\calw = \{ (\z,w) : (\z,\z, w ) \in \calv \}$.
This contains the set $ \{ ( \z, f(\z) ) : \z \in \D\}$ and a point $(z_1, z_3)$ with $z_3 \neq f(z_1)$.
By Lemma~\ref{lemc1}, this means $\calw$ contains a $2$-balanced pair, which means $\calv$ contains
a $3$-balanced pair. Since $\calv$ is relatively polynomially convex, by Theorem~\ref{thmb4} this means
$\calv$ contains a $3$-balanced disk.
Since we are assuming that $\calv$ is larger than a disk, there must be another point, and
hence a 2-balanced disk through this point and the $3$-balanced disk.
So, after an automorphism, we can assume that $\calv$ 
contains $\{ (\z,\z,\z) : \z \in \D \}$ and $\{ (\eta, \omega \eta, g(\eta) ) : \eta \in \D \}$
for some unimodular $\omega$.
If $\omega \neq 1$, then for any $\alpha, \beta$ with $|\alpha| + |\beta| = 1$,
the function $\phi(z) = \alpha z_1 + \beta z_2$ will be a \cp extremal 
for the \cp data
\se\att
\begin{eqnarray}
\nonumber
\psi(0) &\= & 0 \\
\nonumber
D \psi(0) (1 \ 1 \ 1)^t &=& \alpha + \beta  \\
\nonumber
D \psi(0) (1 \ \omega \  g'(0) )^t &=& \alpha  + \beta  \omega\\
D \psi(0) (0\  0\  1)^t &=& 0.
\label{eqd1}
\end{eqnarray}
So by Theorem~\ref{thmb1}, $\phi(\overline{\calv})$ will contain $\T$,
and by polynomial convexity, this means that
$\pi_2 \calv = \D^2$. Hence $\calv $ could not have been one dimensional.
(Indeed, since $\pi_2$ is  Lipschitz, it cannot increase Hausdorff dimension).
If $\omega = 1$, then $g'(0) \neq 1$, so we interchange the second and third coordinates and repeat the argument.

 If $z_2 \neq z_1$, then there will be some point in $\S$ that is $2$-balanced with respect to 
$z$, so by Theorem~\ref{thmb4} $\calv$ contains a $2$-balanced disk in addition to $\S$.
Repeating the previous argument again shows that $\calv$ cannot be two-dimensional.
\ep

\section{$\calv$ is one dimensional and algebraic}
\label{secg}

We shall say that $\calv \subseteq \D^d$ is algebraic if  there is a set of polynomials
%$\{ p_j \}$ in $\C[z_1, \dots, z_d]$
 such that  $\calv$ is the intersection of $\D^d$
with their common zero set.
(The set can always be chosen to be finite by the Hilbert basis theorem.)
Let $\calw$ be the common zero set of the polynomials in $\C^d$
(so $\calv = \calw \cap \D^d$).

\bt
\label{thmg1}
 Suppose:
 (A5) The set $\calv$ is a one-dimensional algebraic subset of $\D^3$ that has the polynomial extension property.
 
 Then it is a retract.
 \et
 
 First, we prove that it is smooth. We need to use the following four results.
The first one  is  \cite[Thm. 5.4A]{wh72}

\begin{proposition}
\label{prop:whit}

Let $V,W$ be analytic spaces and $F:V\to W$ be proper and extend to be continuous and holomorphic on $\overline{V}$
(\ie $F$ is c-holomorphic). Then $F(V)$ is analytic in $W$. 
\end{proposition}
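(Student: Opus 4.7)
The plan is to treat Proposition \ref{prop:whit} as a mild extension of Remmert's classical proper mapping theorem, with normalization serving as the bridge between the c-holomorphic setting and the standard morphism setting. First I would localize in $W$: fix $w_0 \in F(V)$; by properness and the continuous extension to $\overline V$, the fibre $F^{-1}(w_0)$ is a compact subset of $\overline V$. A neighbourhood basis of $F^{-1}(w_0)$ in $\overline V$ can be chosen so that its image sits in an arbitrarily small coordinate chart $U$ around $w_0$ in $W$; covering the fibre by finitely many local analytic charts on $\overline V$ and decomposing each into irreducible germs, it suffices to show that $F(V) \cap U$ is analytic in some neighbourhood $U$ of $w_0$ under the additional assumption that $\overline V$ is locally irreducible along the fibre.

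Next I would pass to the normalization $\nu : \widetilde V \to \overline V$. This is a proper, finite, surjective morphism of analytic spaces, and because $\widetilde V$ is normal, its singular locus has codimension at least two. The pulled-back map $\widetilde F := F \circ \nu$ is continuous on $\widetilde V$ and holomorphic on $\widetilde V_{\mathrm{reg}}$; composing with local coordinates on $U$, each coordinate of $\widetilde F$ is a bounded holomorphic function on $\widetilde V_{\mathrm{reg}}$ that extends continuously across the codimension-two analytic set $\widetilde V \setminus \widetilde V_{\mathrm{reg}}$, so by the Riemann removable-singularity theorem on normal spaces the extension is holomorphic. Thus $\widetilde F$ is a genuine holomorphic morphism of analytic spaces, and it is proper because both $F$ and $\nu$ are.

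With $\widetilde F$ promoted to a morphism, the classical Remmert proper mapping theorem applies directly to the proper holomorphic map $\widetilde F \big|_{\nu^{-1}(V)} : \nu^{-1}(V) \to U$, yielding that $\widetilde F(\nu^{-1}(V))$ is analytic in $U$. Surjectivity of $\nu$ gives $F(V) = F(\nu(\nu^{-1}(V))) = \widetilde F(\nu^{-1}(V))$, so $F(V) \cap U$ is analytic. Since $w_0$ was arbitrary, $F(V)$ is analytic in $W$. If one wants explicit defining equations for $F(V)$ near $w_0$, the local parameterization theorem furnishes them: on each irreducible component of $\nu^{-1}(V)$ the generic rank of $\widetilde F$ is constant, the image is pure-dimensional of that rank, and Weierstrass preparation applied to suitable coordinate projections produces finitely many holomorphic functions on $U$ whose common zero set is $F(V) \cap U$.

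The main obstacle is precisely the upgrade of $\widetilde F$ from a continuous-plus-holomorphic-on-the-regular-part object to a bona fide morphism of analytic spaces. In a non-normal analytic space, continuity across a codimension-one part of the singular locus does not force holomorphy, which is why the passage through the normalization is indispensable; one cannot simply apply Remmert's theorem to $F$ itself. Once this upgrade is in place, the only remaining work is routine bookkeeping to match $\widetilde F(\nu^{-1}(V))$ with $F(V)$, and no further ingredient beyond standard proper mapping theory is required.
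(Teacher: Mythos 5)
The paper does not prove Proposition~\ref{prop:whit} at all: it is quoted verbatim as Theorem~5.4A of Whitney's book \cite{wh72} and used as a black box, so there is no internal argument to compare against. Your proposal is a legitimate self-contained derivation along the standard lines (normalize, upgrade the c-holomorphic map to a genuine morphism, then invoke Remmert's proper mapping theorem), and the overall logic is sound: $\nu$ is finite hence proper, so $\widetilde F|_{\nu^{-1}(V)}$ is proper, and $F(V)=\widetilde F(\nu^{-1}(V))$ by surjectivity of $\nu$. One step deserves tightening. You assert that $\widetilde F=F\circ\nu$ is holomorphic on $\widetilde V_{\mathrm{reg}}$ and then extend across the codimension-two set $\widetilde V\setminus\widetilde V_{\mathrm{reg}}$; but a c-holomorphic $F$ is only known to be holomorphic on $V_{\mathrm{reg}}$, so $\widetilde F$ is a priori holomorphic only on $\nu^{-1}(V_{\mathrm{reg}})$, and the set you must actually cross, $\nu^{-1}(V_{\mathrm{sing}})$, can perfectly well have codimension one in $\widetilde V$. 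The argument survives because the first Riemann extension theorem on a normal space applies to locally bounded (in particular continuous) holomorphic functions across \emph{any} nowhere-dense analytic subset, not just one of codimension two; first remove the thin set inside the manifold $\widetilde V_{\mathrm{reg}}$ by the classical removable-singularity theorem, then use normality at the remaining points, and identify the holomorphic extension with $\widetilde F$ by continuity and density. The opening localization and reduction to local irreducibility is harmless but unnecessary once you work with the global normalization. With that correction your proof is complete and, unlike the paper, actually supplies an argument for the cited result.
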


The next result is from \cite[p. 122]{chi90}.
\begin{proposition}\label{prop:chir}
Let $A$ be an analytic set in $\mathbb C^n$, $a\in A$, $dim_a A=p$. Assume that there is a connected neighborhood $U=U'\times U''$ of $a$ such that $\pi:U\cap A \to U' \subseteq \C^p$ is proper.

Then there exist an analytic set $W\subset U'$, $\dim W <p$, and $k\in \mathbb N$ such that
\begin{itemize}
 \item $\pi: U\cap \pi^{-1}(W) \to U'\setminus W$ is a local $k$-sheeted covering;
 \item $\pi^{-1}(W)$ is nowhere dense in $A_p\cap U$, where $A_p=\{z\in A:\ \dim_z A=p\}$.
\end{itemize}

\end{proposition}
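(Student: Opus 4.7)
The plan is to identify a candidate branching locus $W$ inside $U'$ as the image under $\pi$ of the singular locus of $A$ together with the critical set of $\pi|_{A_p}$, verify that $W$ is analytic via Proposition~\ref{prop:whit}, and then use the inverse function theorem combined with the ``proper local homeomorphism $=$ covering'' principle to conclude.

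First, I would use the hypothesis that $\pi|_{U\cap A}$ is proper to see that each fiber $\pi^{-1}(z')\cap A$ is a compact analytic subset of $\{z'\}\times U''$. After possibly shrinking $U'$ (and correspondingly $U$) around $a$, one can also arrange that $A\cap U$ has no components of dimension $>p$ meeting $U$, and that the $p$-dimensional part $A_p\cap U$ is non-empty only where we expect; equivalently, that the pure $p$-dimensional part dominates $U'$ under $\pi$. With discreteness (hence finiteness) of fibers on this pure part, one sees that $\pi(A_p\cap U)=U'$.

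Next, I would assemble the branch locus. Let $S$ be the union of $\mathrm{Sing}(A)\cap U$ and $\{z\in A\cap U:\dim_z A<p\}$; this is an analytic subset of $U$ of dimension strictly less than $p$. On the complex $p$-manifold $A_p^{\mathrm{reg}}\cap U$, the restriction $\pi|_A$ is a holomorphic map between equidimensional manifolds, so its critical set $C$ (defined locally by the vanishing of a Jacobian determinant in local coordinates on $A$) is analytic in $A_p^{\mathrm{reg}}\cap U$; moreover $\dim C<p$, for otherwise $\pi(A_p^{\mathrm{reg}})$ would be contained in a proper analytic subset of $U'$, contradicting the fact that the image equals $U'$. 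Set
\[
W \ := \ \pi\bigl(S\cup\overline{C}\bigr).
\]
Because $\pi|_{U\cap A}$ is proper, the restriction to the closed set $S\cup\overline{C}$ is c-holomorphic and proper, so Proposition~\ref{prop:whit} ensures $W$ is analytic in $U'$; a dimension count (proper images of analytic sets of dimension $<p$) gives $\dim W<p$.

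Then, on $(A\cap U)\setminus\pi^{-1}(W)$ the map $\pi$ is a local biholomorphism by the inverse function theorem applied on the $p$-manifold $A_p^{\mathrm{reg}}\setminus C$, and it remains proper. A proper local homeomorphism onto its image is a covering map; since $W$ is a proper analytic subset of the connected open set $U'$, $U'\setminus W$ is connected, so the covering has a globally constant number of sheets $k\in\mathbb{N}$. This is the content of the first bulleted conclusion (with the obvious correction that the domain of the covering is $(U\cap A)\setminus\pi^{-1}(W)$). Finally, to prove the nowhere density of $\pi^{-1}(W)$ in $A_p\cap U$, observe that at a regular point $z\in A_p^{\mathrm{reg}}\setminus C$ the map $\pi|_A$ is open, so $\pi$ sends a neighborhood of $z$ in $A_p$ onto a neighborhood of $\pi(z)$ in $U'$; if $\pi^{-1}(W)$ had non-empty interior in $A_p$, this would force $W$ to contain an open subset of $U'$, contradicting $\dim W<p$.

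The main obstacle I expect is the bookkeeping after shrinking: one has to choose $U=U'\times U''$ small enough that $A\cap U$ is pure $p$-dimensional near $a$, that the fibers of $\pi$ are discrete and bounded away from $\partial U''$, and that $U'\setminus W$ is connected—none of these are deep, but they must be arranged simultaneously. Once this is done, Proposition~\ref{prop:whit} does the analytic-set heavy lifting, and the classical covering-space argument finishes the proof.
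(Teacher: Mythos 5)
The paper does not actually prove Proposition~\ref{prop:chir}: it is quoted from Chirka's book (\cite[p.~122]{chi90}) as a known structure theorem for proper projections of analytic sets, so there is no in-paper argument to compare against. On its own terms, your outline follows the standard route (properness gives finite fibers; the image of the pure $p$-dimensional part is open and closed, hence all of the connected $U'$; a proper local homeomorphism over the connected set $U'\setminus W$ is a covering with constant sheet number; nowhere-density of $\pi^{-1}(W)$ follows from openness of $\pi$ at a generic regular point together with $\dim W<p$). Those parts are fine.

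The step that does not go through as written is the analyticity of $W$. You invoke Proposition~\ref{prop:whit} for the set $S\cup\overline{C}$, but that proposition requires its source to be an analytic set, and you have not shown that $S\cup\overline{C}$ is analytic in $U$: the critical set $C$ is analytic only in the manifold $A_p^{\mathrm{reg}}\cap U$, its closure accumulates on $S$, and since $\dim S$ and $\dim C$ may both equal $p-1$, the Remmert--Stein extension theorem does not apply to conclude that $\overline{C}$ (hence $S\cup\overline{C}$) is analytic. This is precisely where the textbook proofs do extra work; Chirka, for instance, builds the canonical defining (Weierstrass-type) functions of the cover over $U'\setminus\pi(S)$, extends their coefficients across the thin set $\pi(S)$ by the Riemann removable singularity theorem, and takes $W$ to be the zero set of the resulting discriminant, which is manifestly analytic of dimension $<p$. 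A smaller imprecision: your argument that $\dim C<p$ should be run component by component --- each $p$-dimensional irreducible component of $A_p$ maps properly with finite fibers onto the connected $U'$, so no single component can be entirely critical --- whereas a single totally critical component would not by itself contradict the density of $\pi(A_p^{\mathrm{reg}})$ in $U'$. Both points are repairable by standard means, and since the paper only cites this result, neither affects the paper itself.
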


The following proposition essentially can be found in proofs that are scattered over Sections 3.1 and 3.2 in \cite{chi90}. For the convenience of the reader we recall its (elementary) proof.
\begin{proposition}\label{prop:prop}
Let $A$ be an analytic set in an open domain $\Omega\subset \C^p\times \C^m$ and $\pi:(z',z'')\to z'$ be a projection onto $\C^p$. If $a=(a', a'')$ is an isolated point of $\pi^{-1}(a')\cap A$, then there is a polydisc $U=U'\times U''$ with the center equal to $a$ such that $\pi:U\cap A\to U'$ is proper.
\end{proposition}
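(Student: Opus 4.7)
The plan is to exploit the isolation of $a$ in the fiber $\pi^{-1}(a')\cap A$ to separate a small polydisc $U''$ around $a''$ from the rest of that fiber by its boundary, and then to propagate that separation to a polydisc $U'$ around $a'$ by a closed-set/compactness argument. Once $U'\times\partial U''$ is disjoint from $A$, no sequence in $A\cap(U'\times U'')$ can drift to the ``side'' boundary, and properness over compact subsets of $U'$ follows from boundedness.

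More precisely, first I would fix a product neighborhood $W=W'\times W''$ of $a$ with $\overline{W}\subset\Omega$. Since $a''$ is isolated in the analytic set $\pi^{-1}(a')\cap A\subset\{a'\}\times\C^m$, there is a polydisc $U''\subset\C^m$ centered at $a''$ with $\overline{U''}\subset W''$ and
\[
\overline{U''}\cap\bigl(\pi^{-1}(a')\cap A\bigr)=\{a''\},
\]
so in particular $A\cap(\{a'\}\times\partial U'')=\emptyset$. Next, the set $K:=\{a'\}\times\partial U''$ is a compact subset of $\Omega$ disjoint from the relatively closed set $A$. By a standard tube-neighborhood argument (pick for each $(a',w'')\in K$ a product neighborhood disjoint from $A$, then extract a finite subcover and take the intersection of the first factors), there is a polydisc $U'\subset W'$ centered at $a'$ such that $\overline{U'}\times\overline{U''}\subset\Omega$ and
\[
A\cap\bigl(U'\times\partial U''\bigr)=\emptyset.
\]

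Set $U=U'\times U''$. To verify that $\pi\colon A\cap U\to U'$ is proper, I would take an arbitrary compact $L\subset U'$ and show that $\pi^{-1}(L)\cap A\cap U=(L\times U'')\cap A$ is compact. It is bounded, being contained in $L\times\overline{U''}$. For closedness in $\C^{p+m}$, suppose $(z_n',z_n'')\in(L\times U'')\cap A$ converges to $(z',z'')$; then $z'\in L$ and $z''\in\overline{U''}$, and $(z',z'')\in A$ since $A$ is closed in $\Omega$ and $L\times\overline{U''}\subset\Omega$. The case $z''\in\partial U''$ is excluded because $A\cap(U'\times\partial U'')=\emptyset$, so $z''\in U''$ and the limit lies in $(L\times U'')\cap A$. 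Hence this preimage is compact.

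The only real subtlety—and the one step worth double-checking—is the passage from $A\cap K=\emptyset$ to $A\cap(U'\times\partial U'')=\emptyset$, since $\partial U''$ need not be contained in a compact subset of $\Omega$ a priori; this is why I arrange up front that $\overline{U''}\subset W''$ with $\overline{W}\subset\Omega$, so that $\partial U''$ sits inside a compact slice of $\Omega$ and the tube argument goes through. Everything else is just unwinding definitions.
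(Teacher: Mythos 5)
Your proof is correct and follows essentially the same route as the paper's: choose $U''$ so that $\overline{U''}$ meets the fiber over $a'$ only at $a''$, then use closedness of $A$ and compactness of $\{a'\}\times\partial U''$ to find $U'$ with $A\cap(U'\times\partial U'')=\emptyset$, from which properness follows. You have merely spelled out the tube-lemma and compactness details that the paper leaves implicit.
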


\bp One can find a polydisc $U''$ such that $\overline{ U''}\cap \pi|_A^{-1}(a) = \{a''\}$. Since $A$ is closed, there is a polydisc $U'$ such that $A$ does not have limits points on $U'\times \partial U''$, which means that $\pi:U\cap A\to U'$ is proper.
\ep

The next tool that will be exploited in the present section is taken from \cite[Chap. V.1]{loj91}.
\black
\begin{proposition}[The analytic graph theorem]\label{prop:graph}
	Let $U$, $V$ be complex manifolds and $f:U\to V$ locally bounded. Then $f$ is holomorphic if and only if its graph $\{(x, f(x)):\ x\in U\}$ is analytic in $U\times V$.
\end{proposition}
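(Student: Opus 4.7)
The proposal is to prove both directions, with the substance in the ``if'' direction.

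\emph{Forward direction.} If $f:U\to V$ is holomorphic, then locally, writing $V\subseteq\C^m$, the graph is the common zero set of the $m$ holomorphic functions $(x,y)\mapsto y_j-f_j(x)$ on $U\times V$, hence analytic.

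\emph{Converse.} The question is local, so I would take small coordinate charts and reduce to the case $U\subseteq\C^n$ open, $V\subseteq\C^m$ open, and $f:U\to V$ with $f$ bounded on $U$ (shrinking $U$ using local boundedness) and with $G:=\{(x,f(x)):x\in U\}$ analytic in $U\times V$. Let $\pi:U\times V\to U$ be projection onto the first factor; then $\pi|_G$ is a bijection onto $U$. First I would observe that boundedness of $f$ makes $\pi|_G$ proper: for any compact $K\subseteq U$, the preimage $(\pi|_G)^{-1}(K)=\{(x,f(x)):x\in K\}$ is contained in $K\times \overline{f(U)}$ which is compact. In particular, at every point $a=(a',f(a'))\in G$, the hypotheses of Proposition \ref{prop:prop} are satisfied (with $a''=f(a')$ an isolated, indeed unique, point of the fiber), so properness holds in a neighborhood and $G$ has pure dimension $n=\dim U$ near $a$.

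Next I would apply Proposition \ref{prop:chir} to $G$ with the projection $\pi$ of relative dimension $0$: there is an analytic set $W\subsetneq U$ with $\dim W<n$ such that $\pi:G\setminus\pi^{-1}(W)\to U\setminus W$ is a local $k$-sheeted covering, for some $k\in\N$. But $\pi|_G$ is already a global bijection, so necessarily $k=1$, i.e.\ $\pi$ restricts to a local biholomorphism $G\setminus\pi^{-1}(W)\to U\setminus W$. Composing the local inverse with the second projection $\pi_2:U\times V\to V$ exhibits $f|_{U\setminus W}$ as holomorphic on $U\setminus W$.

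Finally I would extend $f$ across $W$. Since $f$ is bounded and $W$ has complex codimension at least one, the Riemann removable singularity theorem provides a holomorphic extension $\tilde f:U\to V$ agreeing with $f$ on $U\setminus W$. The only thing left to check is that $\tilde f$ agrees with $f$ on $W$ as well. For $x_0\in W$, approach $x_0$ by a sequence $(x_\nu)\subseteq U\setminus W$; then $(x_\nu,f(x_\nu))=(x_\nu,\tilde f(x_\nu))\to(x_0,\tilde f(x_0))$, and since $G$ is analytic hence closed in $U\times V$, the limit lies in $G$. As $f$ is a single-valued function, $(x_0,f(x_0))$ is the unique point of $G$ over $x_0$, forcing $\tilde f(x_0)=f(x_0)$. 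Hence $f=\tilde f$ is holomorphic on $U$. The main obstacle is the bookkeeping around Chirka's covering theorem—specifically ensuring pure dimension $n$ of $G$ and that the ``exceptional set'' $W$ really is thin enough for Riemann extension—but both follow cleanly from properness of $\pi|_G$, which in turn is exactly the hypothesis of local boundedness of $f$.
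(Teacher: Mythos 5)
The paper does not prove this proposition at all --- it is quoted verbatim as a known result from \cite[Chap.\ V.1]{loj91} --- so there is no internal proof to compare against; your argument stands on its own, and it is essentially correct. Pleasingly, it is built from exactly the two tools the paper has already recorded for other purposes, Propositions \ref{prop:prop} and \ref{prop:chir}, plus Remmert/Riemann extension, so it fits the paper's toolkit. Two points deserve a sentence more than you give them. First, for $(\pi|_G)^{-1}(K)\subseteq K\times\overline{f(U)}$ to be compact you need $\overline{f(U)}$ to be a compact subset of $V$, not merely bounded in the ambient $\C^m$; this is the correct reading of ``locally bounded'' for maps into a manifold and is also what makes your final closed-graph argument legitimate (it is worth noting explicitly that this same argument shows $f$ is \emph{continuous}: any limit of $(x_\nu,f(x_\nu))$ lies in the closed set $G$, and single-valuedness forces the limit to be $(x_0,f(x_0))$). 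Second, ``$G$ has pure dimension $n$'' does not follow from properness alone, which only gives $\dim\le n$: you need continuity of $f$ to get that $\pi$ maps $G\cap(U'\times U'')$ \emph{onto} $U'$ (hence some component has dimension $n$), and then injectivity of $\pi|_G$ to rule out extra lower-dimensional components (two distinct components lying over a common point of $U'$ would violate single-valuedness). With those details inserted, the reduction to a $1$-sheeted covering off a thin set $W$, holomorphy of $f$ on $U\setminus W$, Riemann extension across $W$, and the identification of the extension with $f$ via closedness of the graph all go through as you describe.
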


We are now in position to start the proof of the theorem.
\bl
\label{lemg4}
If (A5) holds, and there is an automorphism $\Phi$ of $\D^3$ so that 
$\Phi(\calv) \subseteq \D^2 \times \{  \eta \} $ for some $\eta \in \D$,
then $\calv$ is a retract.
\el
\bp
Under the hypotheses, $\pi_2(\Phi(\calv))$ is a polynomially convex subset of $\D^2$ that
has the polynomial extension property
 so by Theorem~\ref{thma1}
it is a retract of the bidisk. It follows that  $\Phi(\calv)$, and hence $\calv$,  are retracts of the tridisk.
\ep

%\bl
%\label{lemg3}
% If (A5) holds, then there is an irreducible component $\calw_0$ of $\calw$
% such that $\calw_0 \cap \D^d = \calv$.
% \el
%\bp
%\blue
%Not there yet.
%\black
%\ep
%
%We lose nothing, therefore, in assuming that $\calw$ is irreducible.

\bl
\label{lemg2}
 Assume (A5) holds, and 
%$\calw$ is irreducible. If
  $\calv$ contains a $2$-balanced pair $(w',w'')$  that is not $3$-balanced.  Assume also that $w''$ is a regular point of $\calv$.
  Then there is an automorphism $\Phi$ of $\D^3$ that takes $w'$ to $0$ and $w''$ to
  $w= (w_1, w_1, w_3)$ and an irreducible component $\calv'$ of $\calv$ 
  such that $\Phi(\calv')$ contains $0$ and $w$ and such that
  $\calv' \subseteq \{ z_1 = z_2 \}$.
%then $\calv$ is a retract.
%
%$\calv$ contains $0$ and some point in 
%  the set $\{ |z_3| < | z_2 | = |z_1|\} $, then there is a unimodular $\om$ such that
%  $z_2 = \om z_1$ on a one-dimensional irreducible component of $\calv$.
 \el
%then $\calv$ cannot contain a $2$-balanced pair that is not $3$-balanced,
% unless ...

 \bp
After an automorphism, we can assume that
$\calv$ contains $0$ and a point
% in the set $\{ |z_3| < | z_2 | = |z_1|\} $,
%which we can take to satisfy 
$w = (w_1, w_1, w_3)$ with $|w_3| < |w_1|$.
By Lemma~\ref{lemb2}, the point $w$ is not isolated in $\calv$.
Consider the function
\beq
\psi : \calv \times \C &\  \to \ & \C \\
(z, \z) &\mapsto & z_1 - \zeta z_2 .
\eeq
This function vanishes at $(w,1)$, and therefore on a one-dimensional subvariety of $\calv \times \C$
in a neighborhood of that point. If it vanishes on the set $\zeta =1$, then
$z_2 - z_1$ vanishes on all of some component $\calv'$ of $\calv$.
% (since it is assumed irreducible),
%and then by Lemma~\ref{lemg4} we conclude that $\calv$ is a retract.

Otherwise, by the Weierstrass preparation theorem,
there is  a non-empty open set $E$ of the circle $\T$ so that $\calv$ intersects
$$
W_\om \  := \
\{ |z_3| < | z_2 | = |z_1|\}  \cap \{ z_2 = \omega z_1 \}
$$  for every $\omega \in E$.
Let $F_\omega$ be any inverse to the map $ \zeta \mapsto (\z, \omega \z)$, and define
\beq
G_\omega : \D^3 &\to & \D \\
z &\mapsto &
 F_\omega(z_1 , z_2 ) .
 \eeq
For each $\omega \in E$, there is a point $w = (w_1, \omega w_1, w_3)$ in $W_\om$,
and there is a  geodesic in $\D^3$ that contains $0$ and $w$. The function $G_\om$ is a left inverse
to the Kobayashi extremal through these points. By Theorem~\ref{thmb1}, $\overline{G_\om(\calv)}$ contains the unit circle.
Therefore $\pi_2(\overline{\calv})$ contains $\T \times E$.
But since $\calv$ is algebraic, there can only be finitely many points lying over any point in $\T$, except perhaps for a
zero-dimensional singular set.
\ep

\bl\label{lemg21}
Let $f_i, g_i$, $i=1,2,$ be holomorphic functions in the closed unit ball in $H^{\infty}(\D (t))$. Let $\calv$ be an analytic variety in $\D^3$ that contains two discs $\mathcal S=\{(\lambda, \lambda f_1(\lambda), \lambda f_2(\lambda)):\ \lambda\in \D(t)\}$ and $\mathcal S'=\{(\lambda g_1(\lambda), \lambda, \lambda g_2(\lambda)):\ \lambda \in \D(t)\}$. 
If the  germs of these discs at $0$ are not equal, then one can find two points, one in $\mathcal S\setminus\{0\}$ and the second in $\mathcal S'\setminus\{0\}$, that are arbitrarily close to $0$ and form a $2$-balanced pair.
\el

\bp
Let us consder the values 
\begin{equation}\label{eq:21} \quad \rho(\lambda, \mu g_1(\mu)),\ \rho(\lambda f_1(\lambda), \mu),\ \rho(\lambda f_2(\lambda), \mu g_2(\mu)).
\end{equation}
If the inequality $\rho(\lambda, \mu g_1(\mu))\leq\rho(\lambda f_1(\lambda), \mu)$ is satisfied for $\lambda, \mu \in \D(s)$, where $s>0$ is small, then $f_1$ is a unimodular constant (to see it take $\mu=0$) and $g_1f_1=1$ (put $\lambda f_1 =\mu$). If additionally $ \rho(\lambda f_2(\lambda), \mu g_2(\mu))\leq\rho(\lambda f_1(\lambda), \mu),$ $\lambda,\mu\in \D(s)$, putting $\mu=\lambda f_1$ we find that $\mathcal S$ and $\mathcal S'$ coincide near $0$. This shows that for $\lambda$ and $\mu$ ranging within $\D(s)$ the maximum of the values in \eqref{eq:21} cannot be attained by the first term listed there. By symmetry, the same is true for the second term and a similar argument shows that values in \eqref{eq:21} cannot be dominated by the third term, as well. 

Consequently, allowing $\lambda$ and $\mu$ to range within $\D(s)\setminus\{0\}$ we see that the maximum of three hyperbolic distances in \eqref{eq:21} is attained by at least two of them simultaneously. For such a choice of $\lambda$ and $\mu$ the points $(\lambda, \lambda f_1(\lambda), \lambda f_2(\lambda))$ and $(\mu g_1(\mu), \mu, \mu g_2(\mu))$ form the 2-balanced pair we are looking for.
\ep

Let $\B(t)$ be the polydisc $\D^3(t)$ of radius $t$ centered at the origin.

 \bl
 \label{lemg1}
 If (A5) holds, then $\calv$ is locally a graph of a holomorphic function.
\el

\bp 
Since the property is local it suffices to show that $\mathcal V$ is smooth at $0\in \calv$.
 
Any analytic set is a locally finite union of its connected components. Therefore we can choose $t >0$ so that any irreducible component of $\mathcal V$ that intersects $\B(t)$ contains $0$.
 
For each $j=1,2,3$, write $\mathcal V$ as the union of two analytic sets $\mathcal W_j\cup \mathcal V_j$ such that $\mathcal W_j$ is contained in $\{z_j=0\}$ while $0$ is an isolated point of $\calv_j\cap \{z_j=0\}$.
 
Let $\pi_j:\C^3\to \C$ denote the projection on the $j$-th variable, $z\mapsto z_j$, $j=1,2,3$.
Decreasing $t$ we can assume that $\pi_j|_{U_j\cap \calv_j}\to U_j'$ is proper for some polydisc $U_j=U_j'\times U_j''$ containing $\B(t)$ (Proposition~\ref{prop:prop}) and that any point of $\mathcal V_j\cap U_j$, possibly without $0$, is a regular point of $\calv$.
Let $W_j\subset U_j'$ be as in Proposition~\ref{prop:chir}. Since it is a discrete set, decreasing $t$ we can also assume that $W_j$ and $\D(t)$ have at most one common point and that the common point is $0$, if it exists, $j=1,2,3$.
 
{\bf Claim 1.} Assume that there is a point $x$ in $\B(t)\cap \calv$ such that $|x_1|>|x_2|,|x_3|$. Then, near $0$ the variety $\calv_1$ is a graph $\{(\lambda, \lambda f(\lambda), \lambda g(\lambda)):\ \lambda\in \D(t)\}$ for some $f,g$ in the open unit ball of $H^{\infty}(\D(t))$.

\begin{proof}[Proof of Claim 1] 
To prove the assertion we need to show that $\calv_1$ is single sheeted near $0$, that is the multiplicity of the projection $$\pi:U_1\cap \calv_1\to U_1'$$ is equal to $1$. Actually, this would mean that in a neighborhood of $0$ the variety is $\mathcal V_1$ is of the form $\{(\lambda, \lambda f(\lambda), \lambda g(\lambda)):\ \lambda\in \D(t)\}$ for some functions $f,g$ that are locally bounded, and thus holomorphic, according to Proposition~\ref{prop:graph}. For any $z\in\mathcal V_1\cap U_1$ the pair $(0,z)$ is not $2$-balanced. If it were not the case, we could use Lemma~\ref{lemg2} to find that $f$ and $g$ are unimodular constants, which contradicts the assumption that $|x_j|<|x_1|$, $j=1,2$. Consequently $|f(\lambda)|, |g(\lambda)|<1$, $\lambda\in \D(t)$, proving the assertion.

Since $x_1\notin W_1$ it is enough to show that $\mathcal V_1\cap U_1$ is single sheeted over $x_1$. Suppose the contrary, that is we can find another point $x'=(x_1, y_2, y_3)$ in $\calv \cap U_1$.
 Let $\gamma$ be a curve in $\calv\cap U$ joining $0$ and $x'$ and such that $\gamma(t)\neq x$ for any $0\leq t \leq 1$ and $\gamma(t)$ is a regular point of $\calv$ for any $t\in (0,1]$. The automorphism $$\Phi(z)=(m_{x_1}(z_1), m_{x_2}(z_2), m_{x_3}(z_3))$$ switches $0$ with $x$ and the curve $\Phi \circ \gamma$ joins  $(x_1, x_2, x_3)$ with $(0, m_{x_2}(y_2), m_{x_3}(y_3))$. Therefore the curve must meet one of sets $\{|z_3|\leq |z_1|=|z_2|\}$ or $\{|z_2|\leq |z_1|=|z_3|\}$. We lose no generality assuming that the first possibility holds.
 If $\Phi\circ \gamma$ meets a point $w\neq 0$ such that $|w_1|=|w_2|=|w_3|$, then by Theorem~\ref{thmb4} the set $\Phi(\calv )$
 contains a $3$-balanced disk through $0$, in addition to a curve joining $0$ to $x$. This would make
 $0$ a multiple point of $\Phi(\calv)$, so $x$ would be a multiple point of $\calv$, contradicting the assumption
 that it was smooth.
 If $\Phi\circ \gamma$ meets the set $$\Sigma:=\{z\in \C^3:\ |z_3| < |z_1| = |z_2|\}$$ we get a contradiction using Lemma~\ref{lemg2}, since at the first point of intersection of $\Phi\circ \gamma$ with $\Sigma$, say at $w = \Phi \circ \gamma(t_0)$, 
 a neighborhood of $w$ in $\calv$ contains an analytic disk inside the image of $\Sigma$ under $\Phi$.
 This means that $\calv$ is not smooth at $w$.
 \end{proof}
{\bf Claim 2.} Assume that $x=(x_1, x_1, x_3)\in \mathcal V \cap \B(t)$ is such that $0<|x_3|\leq |x_1|$. Then $\mathcal V_1\cap \{z\in \B(t):\ z_1=z_2\}=\{(\lambda, \lambda, \lambda f(\lambda)):\ \lambda\in \D(t)\}$ for some $f$ in the closed unit ball of $H^{\infty}(\D(t))$. 

\begin{proof}[Proof of Claim 2]
Since $(0,x)$ is either a two- or three-balanced pair, the variety $\calw:=\calv_1\cap\{z_1=z_2\}$ is one dimensional at $0$. Repeating the argument used in Claim 1 it is enough to show that $\calw$ is single sheeted over $\{z\in \B(t):\ z_1= z_2\}$. To see it,
take two points $\mu=(\mu_1, \mu_1, \mu_3)$ and $\nu=(\mu_1, \mu_1, \nu_3)$ in $\mathcal V_1\cap \B(t)$.

If either $|\mu_3|>|\mu_1|$ or $|\nu_3|>|\nu_1|$ then, after a  proper permutation of coordinates, we find from Claim~1 that $\mathcal V\setminus \{z\in \C^3_*:\ z_3=0\}$ is a graph of a function over the third variable: $$\{(\lambda f(\lambda), \lambda g(\lambda), \lambda):\ \lambda\in \D(t)\},$$ where $|f|,|g|<1$ on $\D(t)$, which is impossible, as $x$ belongs to it.

If in turn $|\mu_3|=|\nu_1|$, then for some unimodular $\omega$, the variety $\mathcal V$ contains the disc $\{(\lambda, \lambda, \omega \lambda):\ \lambda\in \D\}$. One can find $\lambda\in \D(t)$ such that $$\rho(\mu_1,\lambda) = \rho(\mu_3, \omega \lambda),$$ which entails that there is a 3-balanced disc in $\calv$ passing through $\mu$ and $(\lambda, \lambda, \eta \lambda)$. Consequently, $\calv$ is not smooth at $\mu$. Of course, the same holds if $|\mu_3|=|\mu_1|$.

Finally consider the case when $|\mu_3|<|\mu_1|$ and $|\nu_3|<|\mu_1|$.  Let $\gamma$ be a curve in $\mathcal V'\cap U$ joining $0$ and $\mu$. A continuity argument proves that there is $s>0$ that satisfies the equality $$\rho(\mu_1, \gamma_1(s)) = \rho(\nu_3, \gamma_3(s)).$$ Again, we can obtain a contradiction with the smoothness constructing a balanced disc passing through $\gamma(s)$ and $\mu$. 
\end{proof}

{\bf Claim 3.} Suppose that there is a point $x\in \mathcal V_1\cap \B(t)\setminus\{0\}$ such that $|x_1|\geq|x_2|\geq |x_3|>0$. Then $\calv_1\cap \B(t)$ is the graph of a holomorphic function.

\begin{proof}[Proof of Claim 3]
Note first that the assertion is an immediate consequence of Claim 1 provided that there is a point in $y\in\calv_1\cap \B(t)$ such that $|y_1|>\max(|y_2|, |y_3|)$. On the other hand, if there is a point $y$ in $\calv_1\cap \B(t)$ that satisfies $|y_{2}|>\max(|y_{1}|,|y_{3}|)$ or $|y_3|>\max(|y_1|, |y_2|)$, then Claim 1 gives a contradiction. 

Therefore, we need to focus  on the case when any $y\in \calv_1\cap \B(t)$ satisfies $|y_1|=|y_2|\geq |y_3|$ or $|y_1|=|y_3|\geq |y_2|$. Note that $\calv_1\cap \{z_3=0\}$ is discrete. Thus Claim~2 provides us with a description of intersections $\calv_1$ with the hyperplanes 
$$l_1=\{z_1=\omega z_2\}, \quad l_2=\{z_1= \omega z_3\}$$
for unimodular constants $\omega$. In particular, if $\calv_1$ lies entirely in one of these hyperplanes, we are done. Otherwise, there are at least two points in $\calv_1\cap \B(t)\setminus \{0\}$ that lie in two different hyperplanes. Applying Claim~2 (after a proper permutation of coordinates and multiplication of them by unimodular constants) we find that $\calv_1$ contains two different analytic discs. 
The possibilities that may occur here are listed below. The first one describes the case when both points lie in hyperplanes of type $l_1$ (or type $l_2$, after a change of coordinates) while the second one refers to the case when one of the points is in $l_1$ and the second in $l_2$:
\begin{enumerate}
	\item[i)] $(\lambda, \lambda, \lambda f(\lambda)), (\lambda, \omega \lambda, \lambda g(\lambda))\in \calv$ for $\lambda\in \D(t)$,
\item[ii)] $(\lambda, \lambda, \lambda f(\lambda)), (\lambda, \lambda g(\lambda), \lambda)\in \calv$ for $\lambda\in \D(t)$,
\end{enumerate}
where $f$ and $g$ are in the closed unit ball of $H^\infty(\D(t))$ and $\omega\in \T$.
Making use of Lemmas~\ref{lemg21} and \ref{lemg2} we see that both cases contradict the smoothness of $\calv$ outside the origin.
\end{proof}

{\bf Claim 4} If $\calw_1$ is not discrete, then it is the graph of a holomorphic function.
\begin{proof}[Proof of Claim 4] If there is a point $(0,y_2, y_3)$ in $\calw_1$ such that $|y_2|\neq |y_3|$ we are done, as after a proper change of coordinates Claim 1 can be applied here. Otherwise, for some unimodular $\omega$ there is a disc of the form $\{(0, \lambda, \omega \lambda): \lambda\in \D\}$ that is entirely contained in $\calv$. If there are two different discs in $\calv$ we end up with a particular case of possibility i) that occurred in Claim 2 (take $f$ and $g$ equal to $0$ and and multiply the coordinates by unimodular constants).
\end{proof}

We come back to the proof of Lemma~\ref{lemg1}. If there is a point $x\in \calv\cap \D(t)\setminus \{0\}$ 
all of whose  coefficients do not vanish, then $\calv$ is a union of at most two graphs of holomorphic functions, due to Claims~3 and 4. If there is no such point, then $\calv\cap \D(t)$ can also be expressed as at most three graphs, according to Claim~4. 

If $\calv$ is not a graph of one function, then permuting coordinates we see that it contains two discs $$\{(\lambda, \lambda f(\lambda), \lambda g(\lambda)):\ \lambda \in \D(t)\},\quad \{(0, \lambda, \lambda h(\lambda)):\ \lambda\in \D(t)\},$$ where $f,g,h$ are in the closed unit ball of $H^\infty(\D(t))$.
%We easily see that there are two cases that need to be considered:
%\begin{enumerate}
%	\item[i)] $(\lambda,0, \lambda f(\lambda)), (0, \lambda, \lambda g(\lambda))\in\calv$ for $\lambda\in \D(t)$,
%	\item[ii)] $(\lambda, \lambda f(\lambda), \lambda g(\lambda)), (0, \lambda, \lambda) \in \calv$ for $\lambda \in \D(t)$,
%\end{enumerate}
Here, again, Lemmas~\ref{lemg2} and \ref{lemg21} give a contradiction.
\ep

{\sc Proof of Theorem~\ref{thmg1}.}
%We can assume that $0 \in \calv$.
We have shown that $\calv$ is smooth.
If $\calv$ contains a $3$-balanced pair, then it contains a $3$-balanced disk by Theorem~\ref{thmb4}.
If this is all, then it is a retract. If it contains any other point, then that point and some point in the $3$-balanced
disk would form a $2$-balanced pair that is not $3$-balanced, and we get a contradiction from Lemma~\ref{lemg2}.

So we can assume that $\calv$ contains no $3$-balanced pairs, and, by Lemma~\ref{lemg2} again,
no $2$-balanced pairs either, or else it would be a retract.  After an automorphism,
we can assume that $0 \in \calv$ and and 
\be
\label{eqg1}
\calv \setminus \{ 0 \} \  \subseteq \  \{ \max( |z_2|, |z_3| )< |z_1 | \} .
\ee
In a neighborhood of $0$, we can write $\calv$ as $\{ (\zeta, f_2 (\zeta), f_3(\zeta) ) \}$ for some holomorphic
functions $f_2$ and $f_3$ that vanish at $0$. 
Apply Proposition~\ref{prop:chir} with $U=\D^2\times \D$ and
$A = \calv$.
By \eqref{eqg1}, the projection
so that $\pi_1 : \calv \to \D$ is proper.
Thus we get that $\calv$ is locally $k$-sheeted over $\D$, except over possibly
finitely discrete set of points. But since $\calv$ is smooth, and squeezed by \eqref{eqg1}, we must have $k=1$.
Therefore $f_2$ and $f_3$ extend to be holomorphic from $\D$ to $\D$, and
$\calv$ is a retract.
\ep

\black

\section{$\calv$ is two-dimensional}
\label{sece}

\begin{theorem}
\label{thme1}
 Let $\mathcal V$ be a polynomially convex 2-dimensional analytic variety that has the extension property in $\mathbb D^3$.

Then, either $\mathcal V$ is a holomorphic retract or, for any permutation of the coordinates, $\mathcal V$ is of the form $\{(z_1, z_2, f(z_1, z_2)):\ (z_1, z_2)\in D\}$, where $D\subset \D^2$ and $f\in \mathcal O(D)$. 

\end{theorem}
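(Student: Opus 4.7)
The plan is to show that if $\calv$ is not a retract, then for each $r \in \{1, 2, 3\}$ the projection $P_r : \D^3 \to \D^2$ that forgets the coordinate $z_r$ is injective on $\calv$. Granted this, $\calv$ is the graph of a bounded function over $U_r := P_r(\calv) \subseteq \D^2$, and this function is holomorphic by the analytic graph theorem (Proposition~\ref{prop:graph}); this yields the second alternative of the theorem. If in addition $U_r = \D^2$ for some $r$, then $\calv$ is a retract by Heath--Suffridge (Theorem~\ref{thma2}), giving the first alternative.

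Fix $r = 3$ and assume $P_3|_\calv$ is not injective; we show $\calv$ must be a retract. Choose distinct $p, q \in \calv$ with $P_3(p) = P_3(q)$, and via an automorphism arrange $p = 0$ and $q = (0, 0, c)$ with $c \in \D \setminus \{0\}$. Case A: some regular point $p^* \in \calv$ satisfies $e_3 \in T_{p^*}\calv$. Translating $p^*$ to $0$, the $2$-dimensional space $T_0\calv$ contains $e_3$ and an independent vector which, after subtracting a multiple of $e_3$, applying diagonal rotations, and possibly swapping coordinates $1$ and $2$, takes the form $w = (w_1, w_2, 0)$ with $w_1 \geq w_2 \geq 0$ and $w_1 > 0$. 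For each unimodular $\alpha$, the vector $v_\alpha := w + w_1 \alpha\, e_3 = (w_1, w_2, w_1 \alpha)$ lies in $T_0\calv$, with first and third components of modulus $w_1$, both at least $|w_2|$. After swapping coordinates $2$ and $3$ (so $(z_1, z_3)$ occupies the first two slots) and invoking polynomial convexity of $P_2(\overline{\calv}) \subseteq \overline{\D^2}$, Theorem~\ref{thmb5} supplies, for each such $\alpha$, a balanced disk $\{(\zeta, \alpha \zeta) : \zeta \in \D\}$ inside $P_2(\calv)$. As $\alpha$ varies over $\T$, these disks sweep out $\{(a, b) \in \overline{\D^2} : |b| \leq |a|\}$, whose polynomial hull contains the torus $\T \times \overline{\D}$ and therefore equals $\overline{\D^2}$; hence $P_2(\calv) = \D^2$. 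A reduction using Lemma~\ref{lemc1} to collapse any multi-sheeted structure (paralleling Steps~3--5 of the proof of Theorem~\ref{thmc1}) then shows $\calv$ is itself a graph over the $(z_1, z_3)$-bidisk, hence a retract by Theorem~\ref{thma2}.

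Case B: $e_3 \notin T_{p^*}\calv$ at every regular point $p^* \in \calv$. Then $P_3$ is a local biholomorphism at every regular point, so near $0$ the variety $\calv$ is a graph $z_3 = f_0(z_1, z_2)$ with $f_0(0, 0) = 0$, and near $q$ a graph $z_3 = f_c(z_1, z_2)$ with $f_c(0, 0) = c$. For small $\zeta, \eta$ the points $(\zeta, 0, f_0(\zeta, 0))$ and $(\eta, 0, f_c(\eta, 0))$ lie in $\calv$, with coordinate-wise pseudo-hyperbolic distances $\rho_1 = \rho(\zeta, \eta)$, $\rho_2 = 0$, $\rho_3 = \rho(f_0(\zeta, 0), f_c(\eta, 0))$, and $\rho_3|_{\zeta = \eta = 0} = |c| > 0$. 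By continuity, a suitable choice of small $\zeta, \eta$ yields $\rho_1 = \rho_3 > \rho_2$, producing a $2$-balanced pair in $\calv$. Theorem~\ref{thmb4} then gives a $2$-balanced disk in $P_2(\calv)$; at any regular point of this disk the tangent space contains a vector with matched moduli in coordinates $1$ and $3$ and, by $2$-dimensionality, an additional independent vector. This returns us to the setup of Case~A and forces $\calv$ to be a retract.

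The main obstacle is verifying the relative polynomial convexity of the two-variable projections $P_r(\overline{\calv}) \subseteq \overline{\D^2}$ required by Theorems~\ref{thmb4} and~\ref{thmb5}, since the hypothesis only asserts polynomial convexity of $\calv$ itself; this appears to need a separate argument exploiting the $2$-dimensionality and the analytic structure of $\calv$. A secondary issue is identifying the precise unimodular constant $\omega_\alpha$ produced by Theorem~\ref{thmb5}, so that varying $\alpha$ over $\T$ genuinely produces every unimodular value. Finally, the last step from ``$P_2(\calv) = \D^2$ and $\calv$ is $2$-dimensional'' to ``$\calv$ is single-sheeted and hence a graph'' will require adapting the connectedness and balanced-pair machinery of Section~\ref{secc} to the three-variable setting.
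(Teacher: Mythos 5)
Your route has a genuine gap at its core, and you have named it yourself: every decisive step (the sweep of balanced disks in Case A, the passage from a $2$-balanced pair to a $2$-balanced disk in Case B) invokes Theorem~\ref{thmb4} or~\ref{thmb5} applied to a two-variable projection, and those theorems require $\pi_{ij}(\overline{\calv})$ to be relatively polynomially convex. That hypothesis is not available here and is not a technicality one can expect to supply: the whole reason the paper treats the one-dimensional case under two separate extra hypotheses (polynomially convex projections in Section~\ref{secd}, algebraicity in Section~5) is that convexity of projections does not follow from convexity of $\calv$. Without it, Theorem~\ref{thmb1} applied to a $2$-balanced extremal such as $(z_1+z_3)/2$ only tells you that $\overline{\calv}$ accumulates at points $(\tau, b_\tau, \tau)$ with $\tau\in\T$ and $b_\tau$ uncontrolled, and the polynomial hull of such a set need not contain any disk. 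The paper's proof is engineered precisely to avoid this: Lemma~\ref{leme1} works directly in $\D^3$ with extremals $F(z)=(z_1+z_2+z_3)/3$ (part 1) and $F(z)=\alpha z_1+\alpha z_2+\beta z_3$ with $2|\alpha|+|\beta|=1$ (part 2), whose unimodular level sets on $\overline{\D^3}$ pin down \emph{all three} coordinates; varying $(\alpha,\beta)$ in part 2 sweeps out a torus $\{(a,a,b):|a|=|b|=1\}$ inside $\overline{\calv}$, whose hull is the full slice $\{(\zeta,\zeta,\eta):\zeta,\eta\in\D\}$. Only polynomial convexity of $\calv$ itself is used. That is the idea your argument is missing, and it is what drives Lemma~\ref{leme2} and Corollary~\ref{cor:leme2} (where the retract alternative actually comes from, via single-sheetedness over every line $\{z_2=\omega z_1\}$ and an envelope-of-holomorphy argument for Reinhardt domains).

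Beyond this, several steps you defer are themselves substantial and are where most of the paper's work lies: the passage from ``$\pi_{12}(\calv)=\D^2$'' to ``single-sheeted, hence a graph'' (Step 2 of Lemma~\ref{leme2}, which needs the Blaschke-product argument to locate a $3$-balanced pair); the treatment of singular points and of possible one-dimensional components glued to the two-dimensional part (Lemma~\ref{leme3} -- note that the analytic graph theorem, Proposition~\ref{prop:graph}, needs $P_r(\calv)$ to be a manifold, which fails if such components exist); and the connectedness of $\calv$ (the final paragraph of the paper's proof). Also, a small slip in Case A: the disks $\{(\zeta,\alpha\zeta)\}$, $\alpha\in\T$, sweep out only $\{|b|=|a|\}$, not $\{|b|\le|a|\}$, though the polynomial-hull conclusion would survive if the convexity issue were resolved.
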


Throughout this section let $\pi_{ij}:\C^3\to \C^2$ denote the projection onto $(z_i, z_j)$ variables.

The following lemma may be seen as the infinitesimal version of Theorem~\ref{thmb1}.
\begin{lemma}\label{leme1} 
1. Suppose that there there is a sequence $\{(t_n, \gamma_n t_n, \delta_n t_n)\}$ in $\calv$ converging to $0$ such that $\gamma_n\to \gamma_0\in \T$ and $\delta_n\to \delta_0\in \T$. Then $\{(\zeta, \gamma_0 \zeta, \delta_0 \zeta):\ \zeta\in \D\} \subset \calv$.

2. Suppose that there are two sequences $\{(t_n, t_n, \delta^j_n t_n)\}$ in $\calv$ converging to $0$ such that $\delta^j_n\to \delta^j_0$, $j=1,2$, and $\delta_0^1 \neq \delta_0^2$. Then $\{(\zeta, \zeta, \eta):\ \zeta\in \D, \eta\in \D\} \subset \calv$.
\end{lemma}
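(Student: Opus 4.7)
The plan is to construct explicit Carath\'eodory-Pick extremals on $\D^3$, apply Theorem~\ref{thmb1} to force specific circles (resp.\ tori) into $\overline{\calv}$, and then invoke polynomial convexity of $\overline{\calv}$ to upgrade these boundary sets to a disk (resp.\ bidisk) inside $\calv$. The main estimate used is a polydisk Schwarz bound: for any $\psi:\D^3\to\D$ with $\psi(0)=0$ and any $v\in\C^3$, $|D\psi(0)v|\le\|\psi\|_{\hio}\|v\|_\infty$; this follows by applying the one-variable Schwarz lemma to the slice $r\mapsto\psi(r\lambda)$ with $\|\lambda\|_\infty\le 1$.

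For Part 1, after passing to a subsequence I obtain a tangent vector $v\in T_0(\calv)$ in the direction $(1,\gamma_0,\delta_0)$, so $|v_1|=|v_2|=|v_3|=\|v\|_\infty$. The candidate extremal is
\[
\phi(\lambda) \ = \ \tfrac{1}{3}\bigl( \lambda_1+\overline{\gamma_0}\lambda_2+\overline{\delta_0}\lambda_3\bigr),
\]
of norm one on $\D^3$ with $D\phi(0)v=\|v\|_\infty$. The Schwarz bound gives $|D\psi(0)v|\le\|\psi\|_{\hio}\|v\|_\infty$ for any competitor $\psi$, so $\phi$ is a \cp extremal for the data $\phi(0)=0$, $D\phi(0)v=\|v\|_\infty$. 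Theorem~\ref{thmb1} yields $\overline{\phi(\calv)}\supseteq\T$. The equality case of the triangle inequality forces $\phi(\lambda)=\tau\in\T$ with $\lambda\in\overline{\D^3}$ to satisfy $\lambda=(\tau,\gamma_0\tau,\delta_0\tau)$, so by compactness $\overline{\calv}$ contains the circle $\{(\tau,\gamma_0\tau,\delta_0\tau):\tau\in\T\}$, whose polynomial hull equals the closed disk $\{(\zeta,\gamma_0\zeta,\delta_0\zeta):\zeta\in\overline{\D}\}$. Polynomial convexity of $\overline{\calv}$ and $\overline{\calv}\cap\D^3=\calv$ finish Part 1.

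For Part 2, the two sequences produce tangent vectors $v^j=(1,1,\delta^j_0)$ in $T_0(\calv)$, $j=1,2$. For each $(\tau_1,\tau_2)\in\T^2$ I take
\[
\phi_{\tau_1,\tau_2}(\lambda) \ = \ \tfrac{1}{3}\bigl(\overline{\tau_1}\lambda_1+\overline{\tau_1}\lambda_2+\overline{\tau_2}\lambda_3\bigr),
\]
with $u^j:=D\phi_{\tau_1,\tau_2}(0)v^j=\tfrac{1}{3}(2\overline{\tau_1}+\overline{\tau_2}\delta^j_0)$. The key claim is that $\phi_{\tau_1,\tau_2}$ is a \cp extremal for the data $\phi(0)=0$, $D\phi(0)v^j=u^j$ for $j=1,2$. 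Any competitor $\psi:\D^3\to\D$ with $\psi(0)=0$ has partial derivatives $c_k:=\partial_k\psi(0)$ satisfying $c_1+c_2+c_3\delta^j_0=u^j$; subtracting the two equations and using $\delta^1_0\ne\delta^2_0$ forces $c_3=\overline{\tau_2}/3$ and $c_1+c_2=2\overline{\tau_1}/3$. Specializing the Schwarz bound to $\lambda\in\T^3$ chosen to align the $c_k$'s gives $|c_1|+|c_2|+|c_3|\le\|\psi\|_{\hio}$, whence $\|\psi\|_{\hio}\ge|c_1+c_2|+|c_3|=1$, so $\phi_{\tau_1,\tau_2}$ is extremal.

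Theorem~\ref{thmb1} applied to $\phi_{\tau_1,\tau_2}$ together with triangle-inequality analysis of its unit-modulus preimage in $\overline{\D^3}$ gives $\overline{\calv}\supseteq\{(\tau_1\tau,\tau_1\tau,\tau_2\tau):\tau\in\T\}$ for every $(\tau_1,\tau_2)\in\T^2$. As $(\tau_1,\tau_2,\tau)$ ranges over $\T^3$ these circles sweep out the full torus $\{(u,u,v):u,v\in\T\}$, whose polynomial hull equals $\{(a,a,b):a,b\in\overline{\D}\}$ (the polynomial $\lambda_1-\lambda_2$ vanishes on the torus, and the rest reduces to polynomial convexity of $\overline{\D^2}$). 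Polynomial convexity of $\overline{\calv}$ then forces this set into $\overline{\calv}$, and intersecting with $\D^3$ yields $\{(\zeta,\zeta,\eta):\zeta,\eta\in\D\}\subset\calv$. The principal obstacle is establishing the simultaneous extremality of $\phi_{\tau_1,\tau_2}$ under two tangent-vector conditions; the derivative-elimination step using $\delta^1_0\ne\delta^2_0$ is precisely what makes this work.
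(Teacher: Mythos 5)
Your proof is correct and follows essentially the same route as the paper's: the same linear extremals $\tfrac13(z_1+\bar\gamma_0 z_2+\bar\delta_0 z_3)$ and $\alpha z_1+\alpha z_2+\beta z_3$ with $2|\alpha|+|\beta|=1$, the same elimination of derivatives using $\delta_0^1\neq\delta_0^2$, and the same triangle-inequality identification of the unimodular level set followed by polynomial convexity. The only cosmetic difference is that you invoke Theorem~\ref{thmb1} with infinitesimal Carath\'eodory--Pick data, whereas the paper (which itself calls the lemma ``the infinitesimal version of Theorem~\ref{thmb1}'') re-runs the triangle/conformal-map argument directly on the sequences, dividing by $t_n$ to extract the derivative identities.
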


\begin{proof}
1. It is enough to prove the lemma for $\gamma_0 = \delta_0=1$. Assume that the assertion is not true. Then we can find a triangle $\Gamma$ in $\mathbb D$ with one vertex on $\T$ such that $F(\calv)\subset D:=\D\setminus \bar\Gamma$, where $F(z) = (z_1 + z_2 + z_3)/3$. Let $\Phi_D:D\to \mathbb D$ be a mapping fixing the origin such that $\Phi_D'(0)>1$. Let $G:\mathbb D^3\to \mathbb D$ be a holomorphic extension of $\Phi_D \circ F$. It is clear that $G(0)=0$, so it follows from the Schwarz lemma that $|G_{z_1}'(0)| + |G_{z_2}'(0)| + |G_{z_3}'(0)|\leq 1$. Now dividing the equality $\Phi_D(F(t_n, \gamma_n t_n, \delta_n t_n)) = 
G(t_n, \gamma_n t_n , \delta_n t_n)$ by $t_n$ and letting $n\to \infty$ we find that $\Phi_D'(0) = G_{z_1}'(0) + G_{z_2}'(0) + G_{z_3}'(0)$; a contradiction.

2. We proceed as in the previous part, with the exception that we take $F(z) =\alpha z_1 + \alpha z_2 + \beta z_3$, where $\alpha$ and $\beta$ are any complex numbers satisfying $2|\alpha| + |\beta | = 1$. Again, what we need is to show that $\overline{F(\calv)}$ contains the unit circle. 
	
Suppose that the assertion is not true, \ie $F(\calv)\subset D:=\D\setminus \Gamma$, where $\Gamma$ is a triangle chosen analogously to before. With $t = \Phi_D'(0)>1$ and
$G$ a norm-preserving extension of $\Phi_D \circ F$, we get
%we would be able to construct $G:\D^3 \to \D$ such that
\[
G_{z_1}'(0) + G_{z_2}'(0) + \delta_0^j G_{z_3}'(0) = t (F_{z_1}'(0) + F_{z_2}'(0) + \delta_0^j F_{z_3}'(0)) .
\]
 From this system of equations we get that $G_{z_1}'(0) + G_{z_2}'(0) = 2 t\alpha$ and $G_{z_3}'(0) = t \beta$. This again
   contradicts the Schwarz lemma.
\end{proof}

\begin{lemma}\label{leme2}
Let us suppose that $0$ is a regular point of $\mathcal V$, $\dim_0\calv=2$, and that, in a neighborhood of $0$,  $\calv$ is given by $$\{(z_1, z_2, f(z_1, z_2))\},$$ where $f$ is a germ of an analytic function at $0$. Let  $\alpha_j = f_{z_j}'(0)$. 

If $\omega\in \T$ is such that $|\alpha_1 + \omega \alpha_2|\leq 1$, then $\mathcal V$ is single sheeted over $\{z_2=\omega z_1 \}$.

\end{lemma}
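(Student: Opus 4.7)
\bigskip

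\noindent\emph{Proof proposal.} I will argue by contradiction. Suppose $\calv$ is not single sheeted over $\{z_2=\omega z_1\}$, so there exist two distinct points $p_1=(a,\omega a,b_1)$ and $p_2=(a,\omega a,b_2)$ of $\calv$ with $b_1\ne b_2$. The centrepiece is the polynomial
\[
\phi(z)\ :=\ \tfrac{1}{2}(z_1+\bar\omega\, z_2),
\]
which lies in $P(\overline\calv)$ and has sup-norm $1$ on $\D^3$. Let $v:=(1,\omega,\alpha_1+\omega\alpha_2)$; this is a tangent vector of $\calv$ at $0$, realised as a limit of secants from the local graph $\zeta\mapsto(\zeta,\omega\zeta,f(\zeta,\omega\zeta))$. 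A direct computation gives $D\phi(0)v=\tfrac12(1+\bar\omega\cdot\omega)=1$. Because $|\alpha_1+\omega\alpha_2|\leq 1$, one has $\max_j|v_j|=1$, and so the polydisk Schwarz lemma at the origin rules out any $\psi\in H^\infty(\D^3)$ with $\psi(0)=0$, $\|\psi\|_\infty<1$ and $D\psi(0)v=1$. Hence $\phi$ is a Carath\'eodory--Pick extremal for the infinitesimal data $(0,v)\mapsto(0,1)$, and this data is contained in $\calv$.

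Applying Theorem~\ref{thmb1} to the extremal $\phi$ we obtain $\overline{\phi(\calv)}\supseteq\T$. For any $\tau\in\T$, the equation $\tfrac12(z_1+\bar\omega z_2)=\tau$ under $|z_1|,|z_2|\leq 1$ saturates the triangle inequality, forcing $z_1=\tau$ and $z_2=\omega\tau$. Thus $\overline\calv$ contains, for every $\tau\in\T$, a point of the form $(\tau,\omega\tau,z_3(\tau))$ with $z_3(\tau)\in\overline\D$, i.e., a closed boundary curve lying in the slice $\{z_2=\omega z_1\}$.

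To derive a contradiction, I would now transplant the same argument to $p_1$ by means of the diagonal automorphism
\[
\Psi(z)\ =\ \bigl(m_a(z_1),\, m_{\omega a}(z_2),\, m_{b_1}(z_3)\bigr),
\]
which carries $p_1$ to $0$ and $p_2$ to the non-zero point $(0,0,c)$ with $c=m_{b_1}(b_2)$. Regularity of $\calv$ at $p_1$ (which may be assumed since the singular locus is $0$-dimensional) transfers to regularity of $\Psi(\calv)$ at $0$, with local graph structure over some pair of coordinates. A Schwarz--Pick estimate on the holomorphic restriction $g(\zeta)=f_{p_1}(a+\zeta,\omega a+\omega\zeta)$ to the line $z_2=\omega z_1$ near $p_1$ yields $|g'(0)|=|\beta_1+\omega\beta_2|\leq 1-|b_1|^2$, where $\beta_j$ are the partials of the local graph function at $p_1$. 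Chasing the derivative through $\Psi$, the $(1,\omega)$-tangent of $\Psi(\calv)$ at $0$ has third component $\tilde\alpha=(1-|a|^2)(\beta_1+\omega\beta_2)/(1-|b_1|^2)$, so $|\tilde\alpha|\leq 1-|a|^2<1$. Hence the same $\phi$ is again a Carath\'eodory--Pick extremal, now for $\Psi(\calv)$ at $0$, and a second application of Theorem~\ref{thmb1} produces a second boundary curve in the slice $\{z_2=\omega z_1\}$ lying in $\overline{\Psi(\calv)}$. Combined with the extra interior point $(0,0,c)\in\Psi(\calv)$ and the polynomial convexity of $\overline{\Psi(\calv)}$, this forces $\overline{\Psi(\calv)}$ to contain a two-dimensional piece of the slice $\{z_2=\omega z_1\}$; but the local graph representation of $\Psi(\calv)$ at $0$ allows only one value of $z_3$ above $(0,0)$, contradicting the presence of $(0,0,c)$.

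The main obstacle I anticipate is the upgrade from two one-dimensional boundary curves plus the extra point $(0,0,c)$ to a two-dimensional piece of $\overline{\Psi(\calv)}$ inside the slice. This should go through by a polynomial-hull / Hardy-space argument: each boundary curve bounds a holomorphic disk (by Hardy/Poisson extension), the two disks together with $(0,0,c)$ generate via polynomial convexity a two-dimensional analytic set in the slice, and this set violates the graph structure at the origin. Executing this last step cleanly, while avoiding any circular appeal to Lemma~\ref{leme2} itself, is the delicate part of the argument.
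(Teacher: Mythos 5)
There is a genuine gap, and it sits exactly where you flag it. Your opening move (the extremal $\phi=\tfrac12(z_1+\bar\omega z_2)$, Theorem~\ref{thmb1}, and the observation that $\phi=\tau\in\T$ forces $z_1=\tau$, $z_2=\omega\tau$) is sound and is in the spirit of the paper, but it only produces, for each $\tau\in\T$, \emph{some} point of $\overline\calv\cap\partial\D^3$ over $(\tau,\omega\tau)$ --- not a continuous curve, and certainly not enough structure for the final step. The passage from ``two such boundary families plus the interior point $(0,0,c)$'' to ``a two-dimensional analytic piece of the slice $\{z_2=\omega z_1\}$ inside $\overline{\Psi(\calv)}$'' is not a valid polynomial-convexity argument: the hull of a selection of boundary points over $\T$ is generically at most one-dimensional, and nothing you have derived forces the $z_3$-fibres to fill up. Since this is the step that delivers the contradiction, the proof does not close. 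Two further problems: (i) the Schwarz--Pick estimate $|\beta_1+\omega\beta_2|\le 1-|b_1|^2$ is applied to $g(\zeta)=f_{p_1}(a+\zeta,\omega(a+\zeta))$, which is only a \emph{germ} at $\zeta=0$; Schwarz--Pick needs a globally defined self-map of the disk, and even then the correct bound at the interior point $a$ is $(1-|b_1|^2)/(1-|a|^2)$, which after your change of variables gives $|\tilde\alpha|\le 1$ rather than $<1$; (ii) ``regularity at $p_1$ may be assumed'' needs an argument, since $p_1,p_2$ are the specific witnesses to multi-sheetedness.

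What the paper does instead, and what your argument is missing, is an intermediate global object: it first shows (Step 1 of its proof) that $\calv$ actually contains an entire analytic disc $\zeta\mapsto(\zeta,\omega\zeta,\varphi(\zeta))$, $\zeta\in\D$, with $\varphi(0)=0$. This is obtained by analyzing the one-dimensional variety $\calv\cap\{z_2=\omega z_1\}$: either it meets $\{|z_3|=|z_1|\}$, giving a $3$-balanced pair and hence (Theorem~\ref{thmb4}) a geodesic disc, or it stays in $\{|z_3|<|z_1|\}$, so the projection $\pi_1$ is proper and onto $\D$, and single-sheetedness of this component is forced by the infinitesimal Lemma~\ref{leme1}. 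Once the global disc exists, a second point $(z_1,\omega z_1,z_3)$ with $z_3\ne\varphi(z_1)$ yields, by an intermediate-value argument, a $\lambda$ with $\rho(\lambda,z_1)=\rho(\varphi(\lambda),z_3)$; the resulting $3$-balanced pair gives a geodesic $\zeta\mapsto(\zeta,\omega\zeta,m(\zeta))$ in $\calv$ meeting the first disc, and Lemma~\ref{leme1}(2) at the intersection point produces $\{(\zeta,\omega\zeta,\eta):\zeta,\eta\in\D\}\subseteq\calv$, which contradicts the graph structure at $0$. That is the mechanism that turns your two one-dimensional objects into the fatal two-dimensional slice; if you want to salvage your write-up, replace the boundary-curve/hull step by the construction of the global disc and the $3$-balanced-pair argument.
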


%We may also assume that it is its regular point which means that $V$ is a complex submanifold in a neighborhood of $0$ of dimension $2$. Switching the variables we may additionally assume that in a neigborhood of $0$ the variety $V$ is of the form $$\{(z_1, z_2, f(z_1, z_2))\},$$ where $f$ is a germ of analytic function at $0$ such that $| f_{z_1}(0)|\leq 1$ and $|f_{z_2}(0)|\leq 1$. Let us denote $\alpha_j =f'_{z_j}(0)$, $j=1,2$.

\begin{proof}
Step 1: In the first step we shall show that for any $\omega\in \T$ satisfying the assertion there is a holomorphic function $\varphi:\D\to \D$, $\varphi(0)=0$, such that an analytic disc $\zeta \mapsto (\zeta, \omega\zeta, \varphi(\zeta))$ is contained in $\calv$.

Let us suppose that $\omega$ is such that $|\alpha_1 + \omega \alpha_2 |= 1.$ Then it follows from Lemma~\ref{leme1} that $\{(\zeta, \omega \zeta, (\alpha_1 + \omega \alpha_2)\zeta):\ \zeta\in \D\}\subset \mathcal \calv$.

So we are left with the case $|\alpha_1 + \omega \alpha_2|<1$. Note that $\mathcal \calv\cap \{z_2=\omega z_1\}$ is a one-dimensional analytic variety. 
Let $\calw_0$ be its irreducible component containing $0$. Two possibilities need to be considered: either $\calw_0\setminus \{0\}$ intersects $\{(z_1, \omega z_1, z_3):\ |z_1|=|z_3|\}$, or $\calw_0\setminus\{0\}$ is contained in $\{(z_1, \omega z_1, z_3):\ |z_3|<|z_1|\}$. In the case of the first possibility, $0$ and a point of the intersection form a $3$-balanced pair, whence the disc $\zeta\mapsto (\zeta, \omega \zeta, \eta\zeta)$ is contained in $\calv$ for some unimodular $\eta$, which  contradicts the local description of $\mathcal \calv$ near $0$ (precisely, the assumption that $|\alpha_1 + \omega \alpha_2|<1$).

Assume that the second possibility holds. Then the projection $\pi_1:(z_1, z_2, z_3)\mapsto z_1$ restricted to the variety $\calw_0$ is proper. Consequently, $\pi_1(\calw_0)$ is a one-dimensional variety in $\D$, whence $\pi_1(\calw_0)=\D$, by Proposition~\ref{prop:whit}. Therefore it suffices to show that $\calw_0$ is single sheeted over $\{(\zeta, \omega \zeta):\ \zeta\in \D\}$. Actually, once we get it we shall be able to express $\calw_0$ as $\{(\zeta, \omega \zeta, \varphi(\zeta): \zeta\in \D\}$, where the function $\varphi$ is, in particular, bounded, and thus holomorphic, according to Proposition~\ref{prop:graph}.

To prove that $\calw_0$ is single sheeted take $(z_1, \omega z_1, z_3)$ and $(z_1, \omega z_1, w_3)$ in $\calw_0$. Recall that $|z_3|,|w_3|<|z_1|$. Let us compose $\calw_0$ and points 
with an idempotent automorphism $\phi$ of $\D^3$ interchanging $0$ and $(z_1, \omega z_1, z_3)$. Then  $0\in \phi(\calw_0)$, $z=(z_1, \omega z_1, z_3)\in \phi(\calw_0)$ and $y=(0, 0, m_{z_3}(w_3))\in \phi(\calw_0)$. If $\phi(\calw_0)\setminus \{0\}$ intersects $\{(z_1, \omega z_1, z_3):\ |z_3|=|z_1|\}$ we can find a 3-balanced pair in $\phi(\calw_0)$,
 which implies that the disc $\zeta\mapsto (\zeta, \omega \zeta, m(\zeta))$ is in $\calw_0$ for some M\"obius map $m.$
Otherwise we can find in $\phi(\calw_0)$ two sequences $(x_n, \omega x_n, a_n x_n)$ and $(y_n, \omega y_n, b_n y_n)$ converging to $0$ such that $|a_n|<1$ and $|b_n|>1$. Using Lemma~\ref{leme1} we get 
\[
\phi(\calw_0) \supseteq \{ (m_{z_1}(\zeta), m_{\omega z_1} (\omega \zeta), m_{z_3}(\delta \zeta)) : \zeta \in \D \},
\]
which means that
\[
\calw_0 \supseteq \{ (\z, \omega \z, m_{z_3}(\delta m_{z_1}(\z))) : \z \in \D \} ,
\]
as claimed. Since $\calw_0\setminus\{0\}\subset \{|z_3|<|z_1|\}$ we find that $m_{z_3} (\delta m_{z_1}(\z) = \eta \z$, $\z\in \D$, for some unimodular $\eta$. This is in a contradiction with the desription of $\calv$ near $0$.

Step 2: Now we shall prove the assertion, that is we shall show that $ \calv$ is single-sheeted over $\{z_2 = \omega z_1\}$. Seeking a contradiction suppose that $(z_1, \omega z_1, z_3)\in \calv$ and $\varphi(z_1)\neq z_3$, where $\varphi$ is an analytic disc contructed in Step 1. Take $\lambda$ such that 
$\rho(\lambda, z_1) = \rho(\varphi(\lambda), z_3)$. To justify that such a $\lambda$ exists note that it is trivial if $\varphi$ is not proper (consider the values above for $\lambda =z_1$ and properly chosen $\lambda$ close to the unit circle). On the other hand if $\varphi$ is a proper seflmapping of the unit disc, then it is a Blaschke product, so $\varphi^{-1}(z_3)$ is non-empty. Then, considering the values for $\lambda=z_1$ and $\lambda'$ that is picked from $\varphi^{-1}(z_3)$ the desired existence of $\lambda$ follows. Then, of course, $(\lambda, \omega \lambda, \varphi(\lambda))$ and $(z_1, \omega z_1, z_3)$ form a 3-balanced pair, which means that there is a geodesic $\zeta\mapsto (\zeta, \omega \zeta, m(\zeta))$, where $m$ is a M\"obius map, contained in $\calv$ and intersecting $\zeta \mapsto  (\zeta, \omega \zeta, \varphi(\zeta))$ exactly at one point. Thus Lemma~\ref{leme1}, applied at the point of intersection, implies that $\{(\zeta, \omega \zeta, \eta ):\ \zeta\in \D, \eta\in \D\}$ is contained in $ \calv$, which gives a contradiction with the local description of $\mathcal V$ near $0$.
\end{proof}

\begin{corollary}\label{cor:leme2}
Keeping the assumptions and notation from Lemma~\ref{leme2}: if $|\alpha_1|+ |\alpha_2|\leq  1$, then $\mathcal V$ is an analytic retract.
\end{corollary}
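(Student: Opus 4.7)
The plan is to apply Lemma~\ref{leme2} for every $\omega\in\T$ and then assemble the resulting one-parameter family of discs into a single bounded holomorphic function on $\D^2$ whose graph coincides with $\calv$.

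Since $|\alpha_1+\omega\alpha_2|\leq|\alpha_1|+|\alpha_2|\leq 1$ holds for every $\omega\in\T$, Lemma~\ref{leme2} yields for each such $\omega$ a holomorphic $\varphi_\omega:\D\to\D$ with $\varphi_\omega(0)=0$ such that
\[
\calv\cap\{z_2=\omega z_1\} = \Delta_\omega := \{(\zeta,\omega\zeta,\varphi_\omega(\zeta)):\zeta\in\D\}.
\]

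Next I would build a holomorphic $F:\D^2\to\D$ whose graph contains every $\Delta_\omega$. Expanding the local germ of $\calv$ at the origin as $f(z_1,z_2)=\sum_{m,n\geq 0}a_{mn}z_1^m z_2^n$, the identity $f(\zeta,\omega\zeta)=\varphi_\omega(\zeta)$, valid near $0$ and extended along each $\Delta_\omega$, gives $\varphi_\omega(\zeta)=\sum_k c_k(\omega)\zeta^k$ with $c_k(\omega)=\sum_{n=0}^k a_{k-n,n}\omega^n$. Parseval in $\zeta$ applied to $\|\varphi_\omega\|_\infty\leq 1$ yields $\sum_k|c_k(\omega)|^2\leq 1$; averaging over $\omega\in\T$ and applying Parseval in $\omega$ to the polynomial $c_k$ then produces $\sum_{m,n}|a_{mn}|^2\leq 1$. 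Hence $F(z_1,z_2):=\sum a_{mn}z_1^m z_2^n$ converges on $\D^2$ to a holomorphic function satisfying $F(\zeta,\omega\zeta)=\varphi_\omega(\zeta)$. Because the distinguished boundary $\{|z_1|=|z_2|=r\}$ of each subpolydisc $\{|z_j|<r\}$ lies in $\{|z_1|=|z_2|\}$, where $|F|\leq 1$ by construction, the maximum modulus principle propagates $|F|\leq 1$ to all of $\D^2$.

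Set $\calw:=\{(z_1,z_2,F(z_1,z_2)):(z_1,z_2)\in\D^2\}$; the map $r(z)=(z_1,z_2,F(z_1,z_2))$ retracts $\D^3$ onto $\calw$. The germs of $\calv$ and $\calw$ agree at $0$, so the irreducible component of $\calv$ through $0$ is exactly $\calw$. To finish it suffices to rule out any other irreducible component $\calv_1\subseteq\calv$, which must be two-dimensional and satisfy $0\notin\calv_1$. If $\pi_{12}(\calv_1)$ is two-dimensional, then single-sheetedness of $\calv$ over each line $\{z_2=\omega z_1\}$ forces $\calv_1\cap\{z_2=\omega z_1\}=\Delta_\omega$ for every $\omega\in\T$, so $\calv_1\cap\calw$ contains the three-real-dimensional set $\bigcup_{\omega\in\T}\Delta_\omega$ --- impossible for a proper analytic subvariety of $\calw$, which has complex dimension at most one. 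If $\pi_{12}(\calv_1)$ is one-dimensional, then $\calv_1=C\times\D$ for an analytic curve $C\subseteq\D^2$ not passing through the origin; a properness argument (the projection to $z_2$ is proper on $C$ if $C\subseteq\{|z_1|<|z_2|\}$, forcing $(0,0)\in C$, contrary to $0\notin C$) shows $C$ cannot lie entirely in $\{|z_1|<|z_2|\}$ or $\{|z_1|>|z_2|\}$, so $C$ meets some line $\{z_2=\omega z_1\}$ at a nonzero point, producing a vertical disc inside $\calv_1\cap\{z_2=\omega z_1\}$ that is not contained in the graph $\Delta_\omega$ --- another contradiction. Hence $\calv=\calw$ is a retract.

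The main obstacle is the final paragraph: ruling out extraneous components of $\calv$ relies crucially on having single-sheetedness for the \emph{entire} circle of slopes $\omega\in\T$, which is exactly what the stronger hypothesis $|\alpha_1|+|\alpha_2|\leq 1$ buys over the pointwise bound of Lemma~\ref{leme2}.
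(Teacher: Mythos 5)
Your construction of the global extension $F$ via the double Parseval argument is correct and is a genuinely different route from the paper's: the paper instead extends $f$ to a neighborhood of $\Delta=\{|z_1|=|z_2|\}$ using the local properness/covering propositions, and then invokes the fact that the envelope of holomorphy of a Reinhardt domain containing $\Delta$ is all of $\D^2$. Your Hardy-space estimate $\sum|a_{mn}|^2\le 1$ is more explicit and buys the extension and the bound in one stroke (note only that you should also observe $F(0)=0$, so $|F|\le 1$ upgrades to $|F|<1$ and the graph really maps into $\D$). Both approaches rest on the same key input, namely that Lemma~\ref{leme2} applies to \emph{every} $\omega\in\T$ when $|\alpha_1|+|\alpha_2|\le 1$.

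The gap is in your final paragraph, in the case where $\pi_{12}(\calv_1)$ is two-dimensional. Single-sheetedness of $\calv$ over the line $\{z_2=\omega z_1\}$ gives only the inclusion $\calv_1\cap\pi_{12}^{-1}(\{z_2=\omega z_1\})\subseteq\Delta_\omega$; it does not force equality, and nothing you have said forces $\pi_{12}(\calv_1)$ to meet $\Delta$ at all (an open subset of $\D^2$ can certainly avoid $\{|z_1|=|z_2|\}$). So the assertion that $\calv_1\cap\calw$ contains $\bigcup_\omega\Delta_\omega$ is unjustified, and the contradiction collapses. The case can be repaired, but by the same properness device you use in the one-dimensional case rather than by the claimed equality: since $\calv_1\cap\calw$ is a proper analytic subset of the irreducible set $\calv_1$, its complement in $\calv_1$ is connected and avoids $\{|z_1|=|z_2|\}$, hence lies entirely in one region, say $\{|z_2|<|z_1|\}$; then $\pi_{13}|_{\calv_1}$ is proper with discrete fibers, so $\pi_{13}(\calv_1)=\D^2$ by Proposition~\ref{prop:whit}, which forces $(0,0,z_3)\in\calv_1$ for every $z_3\in\D$ and contradicts single-sheetedness of $\calv$ over $(0,0)$. (For what it is worth, the paper's own proof of the corollary does not address extraneous components at all --- it only produces the global graph --- so your instinct to close this off is right; it just needs the correct argument.)
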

\begin{proof}
It follows from Lemma~\ref{leme2} that $f$ extends to $\Delta:=\{(z_1, z_2)\in \D^2:\ |z_1|=|z_2|\}$ and any $x\in\Delta\times \D$ is an isolated point of $\pi_{12}^{-1}(\pi_{12}(x))\cap \calv$. 
Applying Propositions \ref{prop:prop} we find that $\pi_{12}|_{\calv}$ is proper when restricted to a small neighborhood of $x$. 
Thus, by Proposition \ref{prop:chir}, $\pi_{12}|_{\calv}$ is a local $k$-sheeted covering near $x$. Since any analytic set containing $\Delta$ is two dimensional, we get that $k=1$. Consequently, $f$ extends holomorphically to a neighborhood of $\Delta$, due to Proposition~\ref{prop:graph}. 

 Take a Reinhardt domain $R_{\Delta}\subset \mathbb D^2$ that contains $\Delta$ such that $f\in \mathcal O(R_{\Delta})$. The envelope of holomorphy of $R_\Delta$, denoted $\widehat{R_\Delta}$, is a Reinhardt domain, as well.
% Theorem 1.12.4, Jarnicki Pflug, Fist steps: Reinhardt domains
Since $R_\Delta$ touches both axis, we infer that the envelope is complete, meaning $(\lambda_1 z_1, \lambda_2 z_2)\in \widehat{R_\Delta}$ for any $\lambda_1, \lambda_2\in \mathbb D,$ and $(z_1,z_2)\in \widehat{R_\Delta}$. Consequently, $\widehat{R_\Delta}=\D^2$, as $\Delta\subset R_\Delta$. Therefore, $f$ extends holomorphically to the whole bidisc.

Since the distinguished boundary of $r \D^2$ (equal to $r\T^2$), $r<0<1$, is contained in $\Delta$ we get that $|f|<1$ on $r\D^2$ for any $0<r<1$. Consequently, $f$ lies in the open unit ball of $H^\infty (\D^2)$.
\end{proof}

\begin{corollary}\label{cor:singsheet}
Let us assume that $0$ is a regular point $\mathcal V$ and that its germ near $0$ is of the form 
\begin{equation}\label{eq:cor} \{(z_1, z_2, f(z_1, z_2))\}.
\end{equation}
Let us denote $\alpha_j:=f_{z_j}'(0)$. Then $\mathcal V$ is an analytic retract if one of the possibilities holds:
\begin{itemize}
\item $|\alpha_1| + |\alpha_2|\leq 1$,
\item $|\alpha_1|\geq  1 +|\alpha_2|$,
\item $|\alpha_2|\geq 1 + |\alpha_1|$.
\end{itemize}

If $\mathcal V$ is not an analytic retract, then the set of $2$-dimensional regular points of $\calv$ is single sheeted in each direction. 

Moreover, for any $x\in \calv_{reg}$ there are two pairs of unimodular constants $(\omega_i, \eta_i)$ such that the analytic disc $\{(\z, \omega_i \z, \eta_i \z):\ \z\in \D\}$ lie in $\phi(\calv)$, where $\phi=(m_{x_1}, m_{x_2}, m_{x_3})$ is an indempotent automorphism switching $0$ and $x$.
\end{corollary}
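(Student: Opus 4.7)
The plan is to dispatch the three bullet points by coordinate permutations that reduce to Corollary~\ref{cor:leme2}, and then, assuming $\calv$ is not a retract, derive the remaining two assertions from the resulting strict inequalities together with Lemma~\ref{leme1}(1). The first bullet, $|\alpha_1|+|\alpha_2|\le 1$, is Corollary~\ref{cor:leme2} verbatim. For $|\alpha_1|\ge 1+|\alpha_2|$ we have $\alpha_1\ne 0$, and the implicit function theorem rewrites $\calv$ locally as $\{(g(z_2,z_3),z_2,z_3)\}$ with $g_{z_2}'(0)=-\alpha_2/\alpha_1$ and $g_{z_3}'(0)=1/\alpha_1$, so
\[
|g_{z_2}'(0)|+|g_{z_3}'(0)|=\frac{1+|\alpha_2|}{|\alpha_1|}\le 1,
\]
and a coordinate permutation lets us reapply Corollary~\ref{cor:leme2}. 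The symmetric case $|\alpha_2|\ge 1+|\alpha_1|$ is identical.

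Now assume $\calv$ is not a retract. Negating the three bullets gives the strict inequalities
\[
|\alpha_1|+|\alpha_2|>1,\qquad \bigl||\alpha_1|-|\alpha_2|\bigr|<1,
\]
which force both $\alpha_j\ne 0$ (for instance $\alpha_1=0$ would demand $|\alpha_2|>1$ and $|\alpha_2|<1$ simultaneously). Hence by the implicit function theorem the germ of $\calv$ at the regular point $0$ is also a graph over $(z_1,z_3)$ and over $(z_2,z_3)$; applying this reasoning at any regular two-dimensional point $x$ after translating by $\phi=(m_{x_1},m_{x_2},m_{x_3})$ gives single sheetedness of $\calv_{\text{reg}}$ in each of the three directions.

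For the disks, work with $\phi(\calv)$ through $0$ with local coefficients $\tilde\alpha_1,\tilde\alpha_2$ satisfying the same strict inequalities. The inequality $\bigl||\tilde\alpha_1|-|\tilde\alpha_2|\bigr|<1<|\tilde\alpha_1|+|\tilde\alpha_2|$ is precisely the condition that the circle $\{\tilde\alpha_1+\omega\tilde\alpha_2:\omega\in\T\}$, of center $\tilde\alpha_1$ and radius $|\tilde\alpha_2|$, meets $\T$ in two distinct points, producing two pairs $(\omega_i,\eta_i)\in\T\times\T$ with $\eta_i=\tilde\alpha_1+\omega_i\tilde\alpha_2$. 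Restricting the local graph $\{(z_1,z_2,\tilde f(z_1,z_2))\}$ of $\phi(\calv)$ to the curve $z_2=\omega_i z_1$ yields a sequence $(t_n,\omega_i t_n,\eta_i t_n+o(t_n))\in\phi(\calv)$, to which Lemma~\ref{leme1}(1) applies, giving $\{(\zeta,\omega_i\zeta,\eta_i\zeta):\zeta\in\D\}\subseteq\phi(\calv)$. The only step requiring genuine care is the two-circle intersection count, which is elementary planar geometry once the strict inequalities are in hand.
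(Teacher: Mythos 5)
Your treatment of the three bullet points is correct and is essentially the paper's: the first is Corollary~\ref{cor:leme2} verbatim, and the other two follow by permuting coordinates (your explicit computation $g_{z_2}'(0)=-\alpha_2/\alpha_1$, $g_{z_3}'(0)=1/\alpha_1$ just makes precise the paper's one-line remark that these cases are ``obtained simply by permuting the coordinates''). Likewise, your construction of the two discs --- the circle $\{\tilde\alpha_1+\omega\tilde\alpha_2:\omega\in\T\}$ meets $\T$ in exactly two points under the strict inequalities, and Lemma~\ref{leme1}(1) applied along $z_2=\omega_i z_1$ produces the discs --- is exactly the argument behind Step~1 of Lemma~\ref{leme2}, which the paper cites for this part; your two-circle intersection count is correct.

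The gap is in the single-sheetedness claim. ``Single sheeted in each direction'' is a \emph{global} statement: for a $2$-dimensional regular point $x$ one must show $\pi_{ij}^{-1}(\pi_{ij}(x))\cap\calv=\{x\}$, i.e.\ no other point of $\calv$ --- possibly far from $x$, on another branch or another irreducible component --- projects to $\pi_{ij}(x)$. This is how the corollary is used later (for instance in the proof of Theorem~\ref{thme1}, where the competing point lies in a different connected component from $\calw$). Your implicit-function-theorem argument only shows that the \emph{germ} of $\calv$ at $x$ is a graph over each pair of coordinates; that is a purely local fact and does not exclude a second sheet over the same base point. The paper closes this gap with Lemma~\ref{leme2}: since none of the three bullet inequalities holds, there are $\omega_i\in\T$ with $|\alpha_1+\omega_i\alpha_2|=1$ (your circle observation), and Lemma~\ref{leme2} --- whose proof is a genuinely global argument using properness of the projection, $3$-balanced pairs and Lemma~\ref{lemc1}-type reasoning --- asserts that \emph{all} of $\calv$ is single sheeted over the line $\{z_2=\omega_i z_1\}$, which passes through $(0,0)=\pi_{12}(x)$ after the normalizing automorphism; hence $\calv$ is single sheeted over $\pi_{12}(x)$ itself. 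You need to route the second assertion through Lemma~\ref{leme2} rather than through the implicit function theorem.
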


\begin{proof} The first case is covered by Corollary~\ref{cor:leme2}, while the other two are obtained simply by permuting the coordinates.

To prove the second part, when $\calv$ is not an analytic retract, choose a point $x$ in $\mathcal V_{reg}$ such that $\dim_x \calv =2$. We want to show that $\pi_{ij}^{-1}(\pi_{ij}(x))\cap \mathcal V =x$ for any choice of coordinates $(z_i, z_j)$. 
We can make two simple reductions: composing with an automorphism of the tridisc we can assume that $x=0$, and we can focus only on the coordinates $(z_1, z_2)$.

Since $0$ is a regular point we can express $\calv$ as in \eqref{eq:cor}. Since $\calv$ is not a holomorphic retract none of the inequalities listed in the statement of the corollary is satisfied. This, in particular, means that there are two unimodular constants $\omega_i$ such that $|\alpha_1 + \omega_i \alpha_2|=1$. It follows from Lemma~\ref{leme2} that $\calv$ is single sheeted over the $\{z_2=\omega_i z_1\}$, whence over $0$, as well.

The proof of Step~1 in Lemma~\ref{leme2} shows that $\eta_i:=\alpha_1 + \omega_i \alpha_2$, $i=1,2,$ satisfy the last assertion of the corollary.
\end{proof}

\begin{lemma}
\label{leme3}
Suppose that $\mathcal V$ is not an analytic retract. Let $\mathcal W$ be its 2-dimensional connected component (which means that $\mathcal W$ is a connected component of $\calv$ and $\dim_x \mathcal W=2$ for some $x\in \mathcal W$). Then
\[
\mathcal W = \{(z_1, z_2, f(z_1, z_2)): (z_1, z_2)\in D\}, 
\]
where $D$ is an open subset of $\D^2$ and $f\in \mathcal O(D, \D)$.

This can be done over each pair of coordinate functions.
\end{lemma}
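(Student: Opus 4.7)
The plan is to represent $\calw$ as the graph of a holomorphic function over a subset of $\D^2$, leveraging Corollary~\ref{cor:singsheet} to obtain the local graph structure at $2$-dimensional regular points and then propagating this to all of $\calw$.

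Concretely, let $\calw_{reg}^{(2)}$ denote the set of $2$-dimensional regular points of $\calw$, which is open and dense in the $2$-dimensional part of $\calw$. Since $\calv$ is not an analytic retract, Corollary~\ref{cor:singsheet} gives $\pi_{12}^{-1}(\pi_{12}(x))\cap\calv=\{x\}$ for every $x\in\calw_{reg}^{(2)}$. Combined with Propositions~\ref{prop:prop} and \ref{prop:chir}, this forces $\pi_{12}|_{\calw_{reg}^{(2)}}$ to be a local biholomorphism onto an open set $D_0\subseteq\D^2$; let $f_0:D_0\to\D$ be the holomorphic function whose graph is $\calw_{reg}^{(2)}$, necessarily satisfying $|f_0|<1$.

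The main step is to show that $\calw$ coincides with the graph of an extension of $f_0$. For any $p\in\calw$ with $\pi_{12}(p)\in D_0$, the regular graph point $(\pi_{12}(p),f_0(\pi_{12}(p)))$ lies in $\calw_{reg}^{(2)}\subseteq\calv$, so Corollary~\ref{cor:singsheet} forces $p$ to equal it. For $p\in\calw$ with $\pi_{12}(p)\notin D_0$, pick a $2$-dimensional local irreducible branch $B$ of $\calw$ at $p$. The fiber $\pi_{12}^{-1}(\pi_{12}(p))\cap B$ must be discrete, else $\pi_{12}(B)$ is $1$-dimensional, which is incompatible with the fact that the generic points of $B$ belong to $\calw_{reg}^{(2)}$ and project to the open set $D_0$. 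Then $\pi_{12}|_B$ is a local finite cover, and its degree, which equals $1$ on $\calw_{reg}^{(2)}$ by Corollary~\ref{cor:singsheet}, remains $1$ everywhere, so $\pi_{12}|_B$ is a local biholomorphism at $p$. If $p$ had only one local branch it would be a $2$-dimensional regular point, a contradiction; if it had several, two of them would give two distinct local graph functions over a neighborhood of $\pi_{12}(p)$, producing two distinct points of $\calv$ over a generic nearby point of $D_0$, again contradicting Corollary~\ref{cor:singsheet}. Lower-dimensional irreducible components of $\calw$ are ruled out similarly: by connectedness of $\calw$, any such $1$-dimensional component would intersect the $2$-dimensional part at some point whose $\pi_{12}$-projection lies in $D_0$, and applying Corollary~\ref{cor:singsheet} at nearby regular graph points forces this $1$-dimensional component to be locally contained in the $2$-dimensional graph, contradicting it being a separate irreducible component.

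This establishes the graph representation over $\pi_{12}$; the same argument with $\pi_{12}$ replaced by $\pi_{13}$ or $\pi_{23}$, which is symmetric in view of Corollary~\ref{cor:singsheet}, yields the representation over the other two pairs. The principal obstacle is the case $\pi_{12}(p)\notin D_0$, which requires the local branch analysis above to rule out singular $2$-dimensional points and attached lower-dimensional components through a combination of the discrete-fiber dimension argument and the local degree computation that reduces everything to a uniform graph picture.
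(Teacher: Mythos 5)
Your overall strategy is the same as the paper's: use Corollary~\ref{cor:singsheet} to get single-sheetedness of $\pi_{12}$ over the $2$-dimensional regular locus, upgrade this via Propositions~\ref{prop:prop} and \ref{prop:chir} to a degree-one proper covering near would-be singular points (hence smoothness by the graph theorem), and then exclude attached lower-dimensional components by intersecting their fibers with the already-established graph. Most of this is sound and matches the paper's proof of Lemma~\ref{leme3}.

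There is, however, one genuine gap: your dismissal of the case where the fiber $\pi_{12}^{-1}(\pi_{12}(p))\cap B$ is not discrete. You claim that a non-discrete fiber forces $\pi_{12}(B)$ to be $1$-dimensional, but this implication is false for a $2$-dimensional irreducible germ: for example $B=\{z_1=z_2z_3\}$ has the entire vertical line as its fiber over $(0,0)$ while $\pi_{12}(B)$ is $2$-dimensional. So your fallback ("incompatible with generic points of $B$ projecting to the open set $D_0$") produces no contradiction. What one actually gets in this case is that a whole disc $\{(a_1,a_2)\}\times\D$ lies in $\calw$, and Corollary~\ref{cor:singsheet} only shows that every point of this disc fails to be a $2$-dimensional regular point of $\calv$ --- a $1$-dimensional set of bad points, which is perfectly consistent with $\dim\calv_{\mathrm{sing}}\leq 1$. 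The paper closes this case by a further step you are missing: at each point $(a_1,a_2,x)$ of that vertical disc, the permuted version of the discrete-fiber argument shows the fiber of $\pi_{23}$ must also be $1$-dimensional (otherwise the point would be regular), so $\D\times\{(a_2,x)\}$ is contained in the singular set for every $x\in\D$, making the singular set $2$-dimensional --- and that is the contradiction. Without this two-directional propagation your argument does not rule out a singular point of $\calw$ sitting on a vertical disc.
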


\begin{proof}
Let $\calw_0$ be a strictly $2$-dimensional component of $\mathcal V$ (\ie the union of its two dimensional irreducible components in $\calw$). We shall show that $(\calw_0)_{sing}$ is empty. 

Proceeding by contradiction, take a point $a\in (\calw_0)_{sing}$. Suppose first that $\dim_a (\calw_0 \cap (\{(a_1, a_2)\}\times \D)) =0$, which means that $\pi_{12}^{-1}(a_1, a_2)\cap \calw_0 \cap U = \{a\}$ for some neighborhood $U$ of $a$.
According to Proposition~\ref{prop:prop} the projection $\pi_{12}|_A$ is proper in a neighborhood of $a$. Since $\mathcal V_{reg}$ is single sheeted by Corollary \ref{cor:singsheet}, we see that $\pi_{12}$ is a single sheeted covering near $a$ (it is a covering according to Proposition~\ref{prop:chir}). The fact that the covering is single-sheeted immediately implies that $\calw_0$ is smooth there -- a contradiction.

 Permuting coordinates, we trivially get from the above reasoning the following statement: $\dim_b(\mathcal W_0\cup(\D\times \{(b_2,b_3)\})) =0$, where $b\in \mathcal W_0$, implies that $b$ is a regular point of $\calw_0$.

So we need to show that $\dim_a (\mathcal \calw_0 \cap (\{(a_1, a_2)\}\times \D)) =0$. If it were not true, \ie $\dim_a (\mathcal \calw_0 \cap (\{(a_1, a_2)\}\times \D)) =1$, we would be able to find a disc $\Delta$ centered at $a_3$ such that $\{(a_1, a_2)\}\times \Delta \subset \calw_0$, whence $\{(a_1, a_2)\}\times \D \subset \calw_0$. 
Since $\mathcal V$ is single sheeted over its regular points we find that $\{(a_1, a_2)\}\times \D \subset\calv_{sing}$, and thus $\{(a_1, a_2)\}\times \D \subset (\calw_0)_{sing}$. 

 Here we can again permute coordinates in the preceding argument --- note that we are able to do it because $\dim_{(a_1,a_2,x)} (\calw_0 \cap( \D \times\{(a_2, x)\})) =1$. In this way we find that $\D\times \{(a_2, x)\}\subset (\calw_0)_{sing}$ for any $x\in \D$. Consequently, $(\calw_0)_{sing}$ is 2-dimensional, which is impossible.

Thus we have shown that $\calw_0$ is smooth, so it is locally a graph. According to Corollary~\ref{cor:singsheet} for any $x\in \calw_0$ that is a regular point of $\calv$, the variety $\calw_0$ is in a neighborhood of $x$,
 a graph over each pair of coordinate functions. 
In particular, none of the inequalities involving derivatives from that corollary (understood after an automorphism) is satisfied at $x_0$, and by the continuity none is satisfied at points $x\in \calw_0\cap \calv_{sing}$ (if there are any), as well. Therefore $\calw_0$ is a graph over every choice of the coordinate functions for any $x\in \calw_0$.

To prove that $\calw_0=\mathcal W$ we  proceed by  contradiction. Assume that there is $x\in \calw_0$ that lies in the analytic set $\mathcal W'$ composed of $1$ dimensional irreducible components of $\calw$. Then $x$ is an isolated point of $\calw'\cap \calw_0$. Let us take $a\in \calw'$ that is sufficiently close to $x$. Changing  coordinates we can suppose that $(a_1,a_2)\neq (x_1, x_2)$. Then $\mathcal V$ is smooth at the point of the intersection
of  $\calw$ and $\pi_{12}^{-1}(\pi_{12}(a))$ (note that $\pi_{12}(\calw_0)$ is open), and thus it is single-sheeted over $\pi_{12}(a)$; a contradiction.
\end{proof}

\begin{proof}[Proof of Theorem~\ref{thme1}]
Suppose that $\calv$ is not an analytic retract. Let $x\in \calv$ be such that $\dim_x\calv =2$. Then, it follows from Lemma~\ref{leme3} that the connected component $\calw$ containing $x$ is one of the forms listed in \eqref{eq:thma4}. Therefore, to prove the assertion we need to show that $\calv$ is connected.

Choose $x\in \calv\setminus \calw$. We can make a few helpful assumptions. First of all, according to Corollary~\ref{cor:singsheet}, it can be assumed that an analytic disc $\{(\lambda, \lambda, \lambda):\ \lambda\in \D\}$ lies entirely in $\calw$. Changing, if necessary, the coordinates we can also assume that there is a point $\lambda_0$ such that $$\rho(\lambda_0, x_1) = \rho(\lambda_0, x_2)\geq \rho(\lambda_0, x_3).$$
 Then we can compose $\calv$ with the automorphism $\Phi$ of the tridisc that interchanges $0$ and $(\lambda_0, \lambda_0, \lambda_0)$ to additionally get that $|x_1|=|x_2|\geq |x_3|$. Let $\omega\in \T$ be such that $x_2= \omega x_1$. Since, by Corollary~\ref{cor:singsheet}, $\calv$ is single sheeted over each point of $\pi_{12}(\calw)$, we are done, provided that $(x_1,\omega x_1)\in \pi_{12}(\calw)$. Suppose that it is not true.

Let us consider two values
$\rho(\lambda, x_1) = \rho(\omega \lambda, x_2) $ and $\rho(f(\lambda, \omega \lambda), x_3)$. If $\lambda$ moves from $0$ in the direction $x_3$,
 then near the first point $\lambda'\in \D$ such that $(\lambda', \omega \lambda')\notin\pi_{12}(\calw)$ the last value tends to 1. Since the second value is smaller for $\lambda=0$,
  we find that there is some $a$ such that $(a, \omega a)\in \pi_{12}(\calw)$ and  the
  two points $(a, \omega a, f(a, \omega a))$ and $x$ form a $3$-balanced pair. In particular, they can be connected with a $3$-geodesic that entirely lies in $\calv$; a contradiction.
\end{proof}

\section{Further properties and examples}
\label{secg}

Let $\calv$ be a relatively polynomially convex set in $\D^3$ that has the extension property and is not a retract. So far two crucial properties have been derived in the preceding section:
\begin{enumerate}
\item[a)] for each choice of the coordinate functions $\calv$ is a graph of a holomorphic function;
\item[b)] for any $x\in \calv$ there exist two pairs of unimodular constants $(\omega_i, \eta_i)$, $i=1,2$, such that $\{(\z, \omega_i \z, \eta_i \z):\ \z\in \D\}$ lies entirely in $\Phi(\calv)$, $i=1,2,$ where $\Phi$ is an idempotent automorphism of $\D^3$ interchanging $0$ and $x$.
\end{enumerate}

\begin{exam}
\label{exg1}
Observe that $$\calv_0:=\{z\in \D^3:\ z_3= z_1+ z_2\}$$ satisfies a) and b). We shall show that $\calv_0$ does not have the extension property. 

Let $U:=\{(z_1, z_2)\in \D^2:\ |z_1+z_2|<1\}$. Let $\varphi_m$ denote the M\"oubius map $\varphi_m(\zeta) = \frac{m-\zeta}{1 -  \bar m \zeta}$, where $m\in \D$. 

Let us put $h(z_1, z_2):=z_1 + z_2$ and observe that there are points $\z$ and $\xi$ in the unit disc such that $(\z, \z \varphi_m(\z))$, $(\xi, \xi \varphi_m(\xi))$ lie in $U_3$ and
\begin{equation}\label{eq:ex}
\rho\left( \frac{h(\z, \z \varphi_m(\z))}{\z}, \frac{h(\xi, \xi \varphi_m(\xi))}{\xi} \right) < \rho(\z, \xi).
\end{equation} Indeed, it suffices to take $\z$ and $\xi$ sufficiently close to $1$ such that $\rho(\z, \xi)$ is big enough.

Note that \eqref{eq:ex} implies that there is $\psi_m\in \mathcal O (\mathbb D, \mathbb D)$ such that both points $(\z,\z\varphi_m(\z), \z\psi_m(\z)), (\xi ,\xi \varphi_m (\xi), \xi\psi_m(\xi))$ lie in $\calv$.

Let us consider a $3$-point Pick interpolation problem
$$\begin{cases}
0\mapsto 0,\\
(\z,\z\varphi_m(\z), \z\psi_m(\z))\mapsto \z\varphi_m(\z),\\
(\xi ,\xi \varphi_m (\xi), \xi\psi_m(\xi))\mapsto \xi \varphi_m (\xi).
\end{cases}
$$
Note that one solution to the above problem is the function
 $$F(z_1, z_2, z_3)=(z_1\varphi_m(z_1) + z_2)/2.$$
  Observe that it is also extremal (see \cite{kosthree}). Indeed, otherwise we would be able to find a holomorphic function $G$ on the tridisc, with the range relatively compact in $\D$, such that
\begin{equation}\label{eq:ex2}G(x, x \varphi_m(x), x \psi_m(x)) = x \varphi_m(x)
\end{equation}
for $x=0,\z,\xi$. Since $\varphi_m$ is a M\"obius map we find that \eqref{eq:ex2} holds for any $x$, contradicting the fact that that $G(\D^3)\subset \subset \D$. 

Consequently, $F$ interpolates extremally, whence $\mathbb T\subset F(\overline \calv)$, by Theorem~\ref{thmb1}. Thus there is a point $z\in \overline\calv$ such that $F(z)=1$, which means that $z_1 \varphi_m(z_1) =1$ and $z_2=1$. 
Note that $(1,1)\notin \overline{U}$.
Now we easily get a contradiction, as the M\"obius map $\varphi_m$ satisfies the following property: any solution of the equation $x \varphi_m(x) = 1$, $x\in \D$, is close to $1$ as $m$ approaches $1$.
\end{exam}

\begin{remark} The argument from  this example can be applied to the algebraic case. To be more precise suppose that $\calv$ is an algebraic set with the extension property that is not a retract. Write $h:=h_3$ and $U=U_3$, where $h_3, U_3$ are as in Theorem~\ref{thma4} and $h(0,0)=0$. We shall also write $(x,y,z)$ for the coordinates $(z_1, z_2, z_3)$. Note that $|h|$ extends continuously to $\overline U$.
	
Repeating the idea from the example we can show that: if $(-1,1)\in \overline U$ is such that $|h(-1,1)|<1$, then $(1,1)\in \overline U$. Using transitivity of the group of automorphisms of the polydisc we can show slightly more. Choose $\omega,\eta\in \T$ such that $(\z, \omega \z, \eta\z)\in \calv$ for $\z\in \D$. Put $\Psi_a(x,y) = (\varphi_a(z), \varphi_{\omega a}(y))$, $x,y\in \D^2$, $a\in \D$. Let $\tilde h = \varphi_{\eta a} \circ h \circ\Psi_a$ and $\tilde \calv=\{(x,y)\in \Psi_a(U):\ z= \tilde h(x,y)\}$. Note that $|\tilde h(\varphi_a(-1), \varphi_{\omega a}(1))|<1$, so $(-\varphi_a(-1), \varphi_{\omega a}(1))\in \overline{\Psi_a(U)}$. Consequently, $(\varphi_a(-\varphi_{a}(1)),1)\in \overline U$ for any $a\in \D$. Thus we have shown that $(\omega, 1)\in \overline U$ for any $\omega\in \T$.
\end{remark}

\begin{remark}
	If we apply the previous remark to the case when $h$ is rational, we get that either $U$ is the whole bidisc or $h(\T^2)\subset \T$ (whenever it makes sense). The simplest class of such functions contains among others $$h:(z_1, z_2)\mapsto \omega\frac{A z_1 + B z_2 + z_1 z_2}{1 + \bar B z_1 + \bar A z_2},$$ where $\omega\in \T$ and $A$, $B$ are complex numbers. Observe that if $|A|+|B|\leq 1$, then $h$ is defined 
%properly
 on the whole bidisc. Thus
 % after a  change of coordinates,
we are interested in the question whether for complex numbers $A,B$ such that $|A|+|B|>1$ and $|A|,|B|\leq 1$ the surface 
	\begin{equation}\label{ex:eq} \mathcal V:= \left\{(x,y,z)\in \D^3:\ z =\omega \frac{A x + By + xy}{1 + \bar B x + \bar A y}\right\},
	\end{equation}
	$\omega\in \T$,	has the extension property.
\end{remark}

\begin{remark}
	It is interesting that \eqref{ex:eq} appears naturally in another way. Namely,  it is the \emph{uniqueness variety} for a three-point Pick interpolation problem in the tridisc. 
	
	To explain it, take $\alpha,\beta, \gamma$ in the unit disc that are not co-linear and let $\delta$ be a strict convex combination of these points. For fixed $x,y\in \D$, $x\neq 0$, $y\neq 0$, $x\neq y$, let us consider the following problem:
	$$\D^3\to \D,\quad 
	\begin{cases}0\mapsto 0,\\ (x \varphi_{\alpha} (x), x \varphi_{\beta}(x), x \varphi_{\gamma}(x))\mapsto x \varphi_{\delta}(x),\\
	(y \varphi_{\alpha} (y), y \varphi_{\beta}(y), y \varphi_{\gamma}(y))\mapsto y \varphi_{\delta}(y).
	\end{cases}
	$$
It is an extremal three point Pick interpolation problem. Moreover, it was shown in \cite{kosthree} that the problem is never uniquely solvable, but  there is a set on which  all solutions do coincide, namely all interpolating functions are equal on the real surface $\{(\z \varphi_{t\alpha} (\z), \z \varphi_{t\beta}(\z), \z \varphi_{t\gamma}(\z)):\ \z\in \D, t\in (0,1)\}.$ This in particular means that the uniqueness variety contains points 
\begin{equation}\label{ex:point}\left( \frac{\alpha t\z - \z^2}{1 - \bar \alpha t \z}, \frac{\beta t\z - \z^2}{1 - \bar \beta t \z}, \frac{\gamma t\z - \z^2}{1 - \bar \gamma t \z}\right),
\end{equation}
where $t$ and $\z$ run through an open subset of $\C^2$ (containing $(0,1)\times \D$). Some computations, partially carried out in \cite{kosthree}, show that the set composed of points \eqref{ex:point} coincides with the variety \eqref{ex:eq} with properly chosen $\omega$, $A$ and $B$.
\end{remark}

%The simplest example satisfying the following properties is $$\calv_0=\{z\in \mathbb D^3:\ z_3=z_1 + z_2\}.$$  We shall show that this is not EP.

%Actually, we shall prove more. To state the result let us introduce the following definition.
%\bd
%Let $b\D^3:= (\T^2\times \overline \D) \cup (\T\times \overline \D \times \T) \cup (\overline\D \times \T^2)$. We shall say that a variety $\calw$ of $\D^3$ is {\emph semi-distinguished} if $$\overline \calw \cap \T^3 = \overline \calw \cap b\D^3.$$
%\ed 

%\begin{proposition}
%	Any algebraic set $\calv$ in $\D^3$ having the extension property that is not a retract is a distinguished variety.
%\end{proposition}

%\begin{proof}
%	We need to show the assertion for a two-dimensional variety of the form~\eqref{eq:thma4} in Theorem~\ref{thma4}. We shall show that $\calv \cap(\T^2 \times \overline\D)\subset \T^3$. For the simplicity of the notation denote $h:=h_3$ and $U=U_3$, where $h_3, U_3$ are as in Theorem~\ref{thma4}. We shall alos write $(x,y,z)$ for the coordinates $(z_1, z_2, z_3)$.

%First not that $|h|$ extends continuously to $\overline U$. 

%Let us suppose that there is a point $(x_0,y_0)\in \overline U\cap \T^2$ such that $|h(x_0, y_0)|<1$. Then $\D((x_0, y_0),\epsilon)\cap \D^3\subset U$ for small $\epsilon>0$.
%\end{proof}

\section{Von Neumann Sets and Spectral Theory}
\label{sech}

There is a connection between the extension property and the theory of spectral sets for $d$-tuples of operators.
Let $T = (T_1, \dots, T_d)$ be a $d$-tuple of commuting operators on some Hilbert space $\h$. We shall call $T$ an
{\em And\^o $d$-tuple} if 
\[
\| p(T) \| \ \leq \ \sup \{  |p(z) | : z \in \D^d \} \quad \forall\ p \in \C[z_1, \dots, z_d ] .
\]
Let $\calv$ be a holomorphic subvariety of $\D^d$. We shall say that a commuting $d$-tuple $T$ is {\em
subordinate to $\calv$} if $\sigma(T) \subset \calv$ and, whenever $g$ is holomorphic on a neighborhood of $\calv$
and satisfies $g|\calv = 0$, then $g(T) = 0$.  
If $f$ is any holomorphic function on 
$\calv$, then by Cartan's theorem $f$ can be extended to a function $g$ that is holomorphic not just on
a neighborhood of $\calv$ but on all of $\D^d$, and if $T$ is subordinate to $\calv$ then $f(T)$ can be defined
unambiguously as equal to $g(T)$.

Let $\A \subseteq \hiv$ be an algebra, and assume $T$ is subordinate to $\calv$.
 We shall say that $\calv$ is an $\A$-spectral set for $T$ if
 \[
\| f(T) \| \ \leq \ \sup \{  |f(z) | : z \in \calv \} \quad \forall\ f \in \A .
\]
\begin{defin}
\label{defvn}
If $\calv$ is  a holomorphic subvariety of $\D^d$, and $\A \subseteq \hiv$,
 we say $\calv$ has the $\A$ von Neumann property if, whenever $T$ is an  And\^o $d$-tuple
 that is subordinate to $\calv$, then $\calv$ is an $\A$-spectral set for $T$.
 \end{defin}
 
 The von Neumann property is closely related to the extension property.
 The following theorem was proved for the bidisk in \cite{agmcvn}.
 \bt
 Let $\calv$ be a holomorphic subvariety of $\D^d$, and $\A$ a subalgebra of $\hiv$.
 Then $\calv$ has the $\A$ von Neumann property if and only if it has the $\A$ extension property.
 \et
 \bp
 One direction is easy. Suppose $\calv$ has the $\A$ von Neumann property, and 
 $T$ is an  And\^o $d$-tuple
 that is subordinate to $\calv$.
 Let $f \in \A$. By the extension property, there is a function $g \in H^\infty(\D^d)$ that extends $f$ and has the same norm, and $f(T) = g(T)$.
 Since $\sigma(T) \subseteq \D^d$, we can approximate $g$ uniformly on a neighborhood of $\sigma(T)$ by
 polynomials $p_n$ with $\| p_n \|_{H^\infty(\D^d)} \leq \| g \|_{H^\infty(\D^d)}$. Therefore
 \[
 \| f(T) \| = \lim \| p_n (T) \| \leq \| g \|_{H^\infty(\D^d)} = \| f \|_\calv .
 \]
 
 To prove the other direction, 
 let $\Lambda$ be a finite set in $\D^d$, with say $n$ elements $\{ \l_1, \dots, \l_n \}$.
 Let $\KL$ denote the set of $n$-by-$n$ positive definite matrices $K$ that have $1$'s down the diagonal,
 and satisfy
 \[
 [ (1 -  w_i \bar w_j)K_{ij} ] \ \geq \ 0 
 \]
 whenever there is a function $\phi$ in the closed unit ball of $H^\i(\D^d)$ that has $\phi(\lambda_i) = w_i$.
 We shall need the following result, which was originally proved by E. Amar \cite{am77}; see also
 \cite{nak90}, \cite{clw92},  \cite{tw09}, \cite[Thm. 13.36]{ampi}.
 \bt
 \label{thmh2}
Let $\Lambda = \{ \l_1, \dots, \l_n \} \subset \D^d$ and $\{ w_1, \dots , w_n \} \subset \C$.
There exists a function $\phi$ in the closed unit ball of $H^\infty(\D^d)$ that maps each $\l_i$ to the
corresponding $w_i$ if and only if
\[
 [ (1 -  w_i \bar w_j)K_{ij} ] \ \geq \ 0 \quad \forall \ K \in \KL .
 \]
 \et
%\bl
%\label{lemh1}
%The set $\KL$ is compact.
%\el
 
 Suppose $\calv$ has the $\A$ von Neumann property but not the $\A$ extension property.
Then there is some $f \in \A$ with  $\| f \|_\calv = 1$ but no extension of norm $1$ to $\D^d$.
There must be a finite set $\Lambda$ and a number $M > 1$ so that every function $\phi$ in $H^\i(\D^d)$
that agrees with $f$ on $\Lambda$ has $\| \phi \| \geq M$. (Otherwise by normal families one would get an extension
of $f$ of norm one).

Let $\Lambda =  \{ \l_1, \dots, \l_n \}$, and let $w_i := f(\l_i)$ for each $i$. By Theorem~\ref{thmh2}, there
exists some $K \in \KL$ so that
\be
\label{eqh1}
 [ (1 -  w_i \bar w_j)K_{ij} ]  \quad {\rm is\ not\ positive\ semidefinite.}
 \ee
Choose unit vectors $v_i$ in $\C^n$ so that 
\[
\langle v_i , v_j \rangle \ = \ K_{ij} .
\]
(This can be done since $K$ is positive definite).
For each point $\l_i$, let its coordinates be given by $\l_i = (\l_i^1, \dots, \l_i^d)$.
Define $d$ commuting matrices $T$ on $\C^n$ by
\[
T_j v_i \= \lambda_i^j v_j 
\]
Then $T$ is an And\^o $d$-tuple, because if $p$ is a polynomial of norm $1$ on $\D^d$, then 
\[
p(T) : v_i \mapsto p(\lambda) v_i ,
\]
so
\begin{eqnarray*}
\langle (1 - p(T)^* p(T) ) \sum c_i v_i, \sum c_j v_j \rangle
& \ = \ &
\sum c_i \bar c_j (1 - p(\l_i) \overline{ p(\l_j)} \langle v_i , v_j \rangle \\
& = & \sum c_i \bar c_j (1 - p(\l_i) \overline{ p(\l_j)} K_{ij} .
\end{eqnarray*}
This last quantity is positive since $K \in \KL$, so $p(T)$ is a contraction, as claimed.
But since $\calv$ is assumed to have the $\A$ von Neumann property, this means that $f(T)$ is also
a contraction, so $I - f(T)^* f(T) \geq 0$.
But
\[
\langle (1 - f(T)^* f(T) ) \sum c_i v_i, \sum c_j v_j \rangle
 \ = \ 
\sum c_i \bar c_j (1 - w_i \overline{ w_j})  K_{ij} ,
\]
and if this is non-negative for every choice of $c_i$ we contradict \eqref{eqh1}.
 
 \ep

{\bf Acknowledgments.} The first author is very grateful to Maciej Denkowski for fruitful discussions.

\bibliography{references}

\begin{thebibliography}{10}

\bibitem{aly16}
J.~Agler, Z.~Lykova, and N.J. Young.
\newblock Geodesics, retracts, and the norm-preserving extension property in
  the symmetrized bidisc.
\newblock Arxiv: 1603:04030.

\bibitem{ampi}
J.~Agler and J.E. M\raise.45ex\hbox{c}Carthy.
\newblock {\em {P}ick Interpolation and {H}ilbert Function Spaces}.
\newblock American Mathematical Society, Providence, 2002.

\bibitem{agmcvn}
Jim Agler and John~E. M\raise.45ex\hbox{c}Carthy.
\newblock Norm preserving extensions of holomorphic functions from subvarieties
  of the bidisk.
\newblock {\em Ann. of Math. (2)}, 157(1):289--312, 2003.

\bibitem{am77}
E.~Amar.
\newblock {\em Ensembles d'interpolation dans le spectre d'une alg\`ebre
  d'operateurs}.
\newblock PhD thesis, University of Paris, 1977.

\bibitem{car51}
H.~Cartan.
\newblock {\em S\'eminaire {Henri Cartan} 1951/2}.
\newblock W.A. Benjamin, New York, 1967.

\bibitem{chi90}
E.M. Chirka.
\newblock Complex analytic sets.
\newblock In A.G. Vitushkin, editor, {\em Several Complex Variables {I}}, pages
  117--158. Springer, Berlin, 1990.

\bibitem{clw92}
B.J. Cole, K.~Lewis, and J.~Wermer.
\newblock {Pick} conditions on a uniform algebra and {von Neumann}
  inequalities.
\newblock {\em J. Funct. Anal.}, 107:235--254, 1992.

\bibitem{gam}
T.W. Gamelin.
\newblock {\em Uniform Algebras}.
\newblock Chelsea, New York, 1984.

\bibitem{gar81}
John~B. Garnett.
\newblock {\em Bounded Analytic Functions}.
\newblock Academic Press, New York, 1981.

\bibitem{gun2}
R.~Gunning.
\newblock {\em Introduction to holomorphic functions of several variables,
  Volume {II}}.
\newblock Brooks/Cole, Belmont, 1990.

\bibitem{ghw08}
Kunyu Guo, Hansong Huang, and Kai Wang.
\newblock Retracts in polydisk and analytic varieties with the
  {$H^\infty$}-extension property.
\newblock {\em J. Geom. Anal.}, 18(1):148--171, 2008.

\bibitem{hs81}
L.~F. Heath and T.~J. Suffridge.
\newblock Holomorphic retracts in complex {$n$}-space.
\newblock {\em Illinois J. Math.}, 25(1):125--135, 1981.

\bibitem{henpol84}
G.M. Henkin and P.L. Polyakov.
\newblock Prolongement des fonctions holomorphes born\'ees d'une
  sous-vari\'et\'ee du polydisque.
\newblock {\em Comptes Rendus Acad. Sci. Paris S\'er. I Math.},
  298(10):221--224, 1984.

\bibitem{kn10b}
Greg Knese.
\newblock Polynomials defining distinguished varieties.
\newblock {\em Trans. Amer. Math. Soc.}, 362(11):5635--5655, 2010.

\bibitem{kosthree}
{\L}ukasz Kosi\'nski.
\newblock Three-point {N}evanlinna-{P}ick problem in the polydisc.
\newblock {\em Proc. Lond. Math. Soc. (3)}, 111(4):887--910, 2015.

\bibitem{kmc17}
{\L}ukasz Kosi\'nski and John~E. M\raise.45ex\hbox{c}Carthy.
\newblock Norm preserving extensions of bounded holomorphic functions.
\newblock arXiv:1704.03857.

\bibitem{lem81}
L.~Lempert.
\newblock La m\'etrique de {Kobayashi} et la repr\'esentation des domaines sur
  la boule.
\newblock {\em Bull. Soc. Math. France}, 109:427--484, 1981.

\bibitem{loj91}
S.~{\L }ojasiewicz.
\newblock {\em Complex Analytic Geometry}.
\newblock Birkh{\"a}user, Boston, 1991.

\bibitem{nak90}
T.~Nakazi.
\newblock Commuting dilations and uniform algebras.
\newblock {\em Canad. J. Math.}, 42:776--789, 1990.

\bibitem{rud69}
W.~Rudin.
\newblock {\em Function Theory in {Polydiscs}}.
\newblock Benjamin, New York, 1969.

\bibitem{th03}
P.J. Thomas.
\newblock Appendix to norm preserving extensions of holomorphic functions from
  subvarieties of the bidisk.
\newblock {\em Ann. of Math.}, 157(1):310--311, 2003.

\bibitem{tw09}
Tavan~T. Trent and Brett~D. Wick.
\newblock Toeplitz corona theorems for the polydisk and the unit ball.
\newblock {\em Complex Anal. Oper. Theory}, 3(3):729--738, 2009.

\bibitem{wh72}
Hassler Whitney.
\newblock {\em Complex analytic varieties}.
\newblock Addison-Wesley Publishing Co., Reading, Mass.-London-Don Mills, Ont.,
  1972.

\end{thebibliography}

\end{document}